\theoremstyle{plain}
\newtheorem*{conjectuur*}{Conjecture}
\newtheorem{theorem}[subsection]{Theorem}
\newcommand\Thm[1]{Theorem~\ref{#1}}
\newtheorem{corollary}[subsection]{Corollary}
\newcommand\Cor[1]{Corollary~\ref{#1}}
\newtheorem{lemma}[subsection]{Lemma}
\newcommand\Lem[1]{Lemma~\ref{#1}}
\newtheorem{proposition}[subsection]{Proposition}
\newcommand\Prop[1]{Proposition~\ref{#1}}
\newtheorem{conjecture}[subsection]{Conjecture}
\newcommand\Conj[1]{Conjecture~\ref{#1}}
\newtheorem*{citedtheorem}{Theorem}
\theoremstyle{definition}
\newtheorem{definition}[subsection]{Definition}
\newtheorem{example}[subsection]{Example}
\newcommand\Examp[1]{Example~\ref{#1}}
\theoremstyle{remark}
\newtheorem{remark}[subsection]{Remark}
\newcommand\Rem[1]{Remark~\ref{#1}}
\newcommand{\emptyprop}{q}
\newcommand \seq[2]{#1\mathstrut_{#2}}
\newcommand \ul[1]{\seq{#1}\natural }
\newcommand\cp[1]{{#1}\mathstrut_\sharp}
\newcommand \acf{algebraically closed field}
\newcommand \ann[2]{\operatorname{Ann}_{#1}(#2)}
\newcommand \ch{characteristic}
\newcommand \CM{Coh\-en-Mac\-au\-lay}
\newcommand \DVR{discrete valuation ring}
\newcommand \homo{homomorphism}
\renewcommand\iff{if and only if}
\newcommand \inv[1]{{#1^{-1}}}
\newcommand \iso{\cong}
\newcommand \loc{{\mathcal {O}}}
\newcommand \los{\L os' Theorem}
\newcommand \map[1]{{\newcommand{\tmpprop}{#1q}  \if\tmpprop\emptyprop \to\else \xrightarrow{{\phantom{i}{#1}\phantom{i}}}\fi}} 
\newcommand \maxim{\mathfrak m}
\newcommand \nat{\mathbb N}
\newcommand \norm[1]{\left|#1\right|}
\newcommand \onto{\twoheadrightarrow}
\newcommand \op\operatorname
\newcommand \pol[2]{#1[#2]}
\newcommand \pow[2]{#1[[#2]]}
\newcommand \pr{\mathfrak p}
\newcommand \primary{\mathfrak g}
\newcommand \range [2]{#1,\dots,#2}
\newcommand \rij[2]{(#1_1,\dots,#1_{#2})}
\let\sub\subseteq
\newcommand \tensor{\otimes}
\newcommand \tor[4]{\operatorname{Tor}^{#1}_{#2}(#3,#4)}
\newcommand \zet{\mathbb Z}
\newcommand \exactseq [5]{0\to{#1}\:\map{#2}\:{#3}\:\map{#4}\:{#5}\to0}
\newcommand{\commdiagram}[9][]{%
\begin{equation}
{\newcommand{\tmpprop}{#1q} 
\if\tmpprop\emptyprop \relax\else \label{#1}\fi}
\begin{aligned}%
\mbox{
\begin{picture}(130,90)%
\put(120,70){\vector( 0,-1){50}}%
\put(10,80){\vector( 1, 0){100}}%
\put(0,70){\vector( 0,-1){50}}%
\put(10,10){\vector( 1, 0){100}}%
\put(115,80){\makebox(0,0)[l]{$#4$}}%
\put(5,80){\makebox(0,0)[r]{$#2$}}%
\put(115,10){\makebox(0,0)[l]{$#9$}}%
\put(5,10){\makebox(0,0)[r]{$#7$}}%
\put(-3,50){\makebox(0,0)[r]{$#5$}}
\put(123,50){\makebox(0,0)[l]{$#6$}}
\put(60,3){\makebox(0,0)[c]{$#8$}}
\put(60,88){\makebox(0,0)[c]{$#3$}}
\end{picture}}
\end{aligned}
\end{equation}}
\newcommand\commtrianglefront[7][]{%
\begin{equation}
{\newcommand{\tmpprop}{#1q} 
\if\tmpprop\emptyprop \relax\else \label{#1}\fi}
\begin{aligned}%
\mbox{
\begin{picture}(120,80)%
\put(55,68){\vector(-1,-2){30}}
\put(65,68){\vector(1,-2){30}}
\put(30,5){\vector(1,0){60}}
\put(60,75){\makebox(0,0)[c]{$#2$}}
\put(25,5){\makebox(0,0)[r]{$#4$}}
\put(95,5){\makebox(0,0)[l]{$#6$}}
\put(60,0){\makebox(0,0)[c]{$#5$}}
\put(37,43){\makebox(0,0)[r]{$#3$}}
\put(83,43){\makebox(0,0)[l]{$#7$}}
\end{picture}}
\end{aligned}
\end{equation}}
\newcommand\commtriangleback[7][]{%
\begin{equation}
{\newcommand{\tmpprop}{#1q}
\if\tmpprop\emptyprop \relax\else \label{#1}\fi}
\begin{aligned}%
\mbox{
\begin{picture}(120,80)%
\put(55,70){\vector(-1,-2){30}}
\put(65,70){\vector(1,-2){30}}
\put(30,5){\vector(1,0){60}}
\put(60,75){\makebox(0,0)[c]{$#2$}}
\put(25,5){\makebox(0,0)[r]{$#6$}}
\put(95,5){\makebox(0,0)[l]{$#4$}}
\put(60,0){\makebox(0,0)[c]{$#5$}}
\put(37,43){\makebox(0,0)[r]{$#7$}}
\put(83,43){\makebox(0,0)[l]{$#3$}}
\end{picture}}
\end{aligned}
\end{equation}}
 \newcommand\Wlike{Witt-closed}
\newcommand \powint[1]{#1^{\text{pow}}}
\newcommand \wi[1]{\mathcal W(#1)}
\newcommand \pint[2]{\frob{#1}^{\text{int}}(#2)}
\newcommand \sat[3]{\op{sat}_{#1}(#2;#3)}
 \newcommand\qdef{quasi-deformation}
 \newcommand\vs{very small}
 \newcommand\fr{Frobenius transform}
\newcommand \Witt[1]{#1^{\mathcal W}}
\newcommand \scal[2]{#1_{#2}^\wedge}
 \newcommand\an{analytic}
  \newcommand\trace[2]{\op{Tr}_{#1}({#2})}
\newcommand\mat[1]{\mathbb{#1}}
\newcommand\frob[1]{\mathbf{F}_{#1}}
\newcommand\tuple[1]{\mathbf{#1}}
\newcommand\tr[2]{#1^{(#2)}}
\title {Maximal Cohen-Macaulay modules over local toric rings}
\author{Hans Schoutens}
\date\today
\address{Department of Mathematics\\
NYC College of Technology and
the CUNY Graduate Center\\
New York, NY, USA}
\email{hschoutens@citytech.cuny.edu}
\subjclass[2010]{13D22,13A35,13F35}
\begin{document}
\begin{abstract} 
In analogy with the classical, affine toric rings, we define a local toric ring as the quotient of a   regular local ring modulo an ideal generated by binomials in a regular system of parameters with unit coefficients; if the coefficients are just $\pm1$, we call the ring purely toric. 
  We prove  the following  results  on the existence of maximal \CM\ modules: (EQUI$\mathstrut_p$)  we construct certain families of  local toric    rings    satisfying Hochster's small \CM\ conjecture in positive \ch; (EQUI$\mathstrut_0$) provided Hochster's small \CM\ conjecture holds in positive \ch\ with the additional condition that the multiplicity of the small \CM\ module is bounded in terms of the parameter degree of the ring, then any local ring (not necessarily toric) in  equal \ch\ zero admits a formally etale  extension satisfying Hochster's small \CM\ conjecture (this applies in particular to the families from (EQUI$\mathstrut_p$)); and (MIX), in mixed \ch, we show that all    purely toric local rings   satisfy Hochster's big \CM\ conjecture, and so do those belonging to the families from (EQUI$\mathstrut_p$).
\end{abstract}
\maketitle



\section{Introduction}
Recall that a (not necessarily finitely generated, aka \emph{big}) module $Q$ over a Noetherian local ring $(R,\maxim)$ is called \emph\CM, if $\dim M=\op{depth}(M)$. One is especially interested in the case where this common value is also the dimension of $R$, and we will refer to such a module as a \emph{maximal \CM} module, and abbreviate it as MCM.\footnote{Hochster and others sometimes leave out the `maximal', which might lead to confusion.} When an MCM  is   finitely generated, we      emphasize this by calling it \emph{small}.   By taking completion, one can always ensure that $Q$ satisfies the following stronger property: any system of parameters  on $R$ is $Q$-regular ($Q$ is then a so-called \emph{balanced MCM}), and we will henceforth just mean this latter, when discussing the existence of MCM's.

The interest of these notions, introduced by Hochster, is that a Noetherian local ring  admitting an MCM satisfies almost all of the homological conjectures; for Serre's positivity conjecture one needs the MCM to be moreover small (see \S\ref{s:intform} for more details). Since all homological conjectures admit faithfully flat descent, there is no loss of generality in proving the existence of (big or small) MCM's after taking a scalar extension (in the sense of \cite[\S3]{SchClassSing}), and so we may assume that $R$ is furthermore complete and has algebraically closed residue field, which we may even take to have uncountable cardinality. Moreover, in the mixed \ch\ case, to avoid pathologies, we require that $R$ is torsion-free over some complete $p$-ring (=mixed \ch\ complete \DVR\ with uniformizing parameter $p$). To not have to repeat all these conditions in this introduction, we call such a ring temporarily  \emph{\an}.
The current state of the existence of MCM's for   \an\ ring{}s, leaving aside some   special cases, can be summed up  as follows:

\begin{citedtheorem}[\cite{HoBCM3,HHbigCM,HoCurr}]\label{T:knownMCM}
A $d$-dimensional  \an\ ring  admits 
\begin{description}
\item[Hochster-Huneke] in equal \ch,   a big MCM \emph{algebra};
\item[Hochster/Heitmann] in mixed \ch\ and $d\leq 3$,   a big MCM;
\item[Hochster/Hartshorne/Peskine-Szpiro] if $d=2$, or, in the graded case,  if $d=3$ and    the \ch\ is positive,     a small MCM.
\end{description}
\end{citedtheorem}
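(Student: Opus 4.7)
The three statements should be proved by substantially different methods, so I would organize the argument in three independent parts.

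For the equal characteristic case (a), my plan is to work in positive characteristic first and then reduce the equal characteristic zero case to it. In characteristic $p$, the natural candidate is the absolute integral closure $R^+$, i.e., the integral closure of $R$ in an algebraic closure of its fraction field. The key property to establish is \emph{colon-capturing}: for any system of parameters $\range{x_1}{x_d}$ and any relation $r\cdot x_d \in (\range{x_1}{x_{d-1}})$ over $R$, one finds a module-finite extension $R \to S \sub R^+$ in which the element $r$ lies inside $(\range{x_1}{x_{d-1}})S$. The main tool is the Frobenius: by raising an equation $r x_d = \sum s_i x_i$ to the $p^e$-th power and adjoining suitable $p^e$-th roots, one can split off the obstruction module. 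For equal characteristic zero, I would descend: model the \an\ ring as a fiber of a finitely generated $\zet$-algebra, reduce modulo almost all primes $p$, invoke the positive-characteristic case, and lift back via an ultraproduct (or standard Artin approximation) to get a big MCM algebra over $R$ itself.

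For part (b), the mixed-characteristic case in dimension at most three, the Frobenius is not available, and the main obstacle is to find a substitute for extracting $p^e$-th roots. I would follow Heitmann's strategy: in place of $R^+$ use the ``full extended plus closure,'' which allows adjoining $p^{1/n}$-th roots of certain elements. The crucial input, and the reason the argument is restricted to $d\leq 3$, is Heitmann's direct summand theorem in dimension three, which says that the obstruction to $R \into R^+$ splitting lives in a module annihilated by arbitrarily small powers of $p$; this suffices to conclude that any three-parameter relation can be killed in a partial integral extension. The hard part is controlling the $p$-torsion in $\op{Tor}^R_1$-style modules using Heitmann's delicate estimates on annihilators of local cohomology.

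Part (c) splits into two cases. For $d=2$, the plan is to exhibit a small MCM as the integral closure $\widetilde R$ of $R$ in its total ring of fractions. After reducing to the domain case by taking quotients by minimal primes, excellence of the \an\ ring ensures $\widetilde R$ is module-finite; then $\widetilde R$ is normal and two-dimensional, hence \CM\ by Serre's criterion $(S_2)$, which is automatic in dimension two for a normal ring. For the graded three-dimensional positive-characteristic case, I would follow Hartshorne/Peskine-Szpiro and work geometrically on $X=\op{Proj} R$: take a resolution of singularities $\pi\colon Y\to X$ and study $\bigoplus_n H^0(Y,\pi^*\loc(n))$, using Serre vanishing together with the Peskine-Szpiro result that high Frobenius powers kill the relevant local cohomology $H^i_\maxim$ to upgrade a ``generic'' depth statement into a true MCM over $R$. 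The chief difficulty is that, outside the graded positive-\ch\ setting, neither the projective geometry nor Frobenius is available, which is precisely why small MCM's in dimension $\geq 3$ remain open in general.
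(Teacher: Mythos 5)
This is a \emph{cited theorem}: the paper gives no proof and simply attributes it to \cite{HoBCM3,HHbigCM,HoCurr}, so there is no in-paper argument to compare against. Your outline is a fair summary of the methods behind the cited results: Hochster--Huneke's construction of $R^+$ as a big MCM algebra in characteristic $p$ via colon-capturing, with descent to equal characteristic zero by approximation/ultraproducts (the paper itself runs the analogous Lefschetz-hull reduction in the proof of \Thm{T:Hochequi}); Heitmann's $p^{1/n}$-annihilation of $H^2_\maxim(R^+)$ for three-dimensional mixed-characteristic domains, which Hochster converts into a big MCM algebra in \cite{HoBCM3}; and normalization combined with Serre's $(S_2)$ criterion in dimension two.

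One imprecision worth flagging: for the graded three-dimensional positive-characteristic case, the standard Hartshorne/Peskine-Szpiro/Hochster argument does not pass through a resolution of singularities $Y\to X$, and the section ring $\bigoplus_n H^0(Y,\pi^*\loc_X(n))$ would (once $R$ is normalized) just reproduce $R$ itself rather than furnish an MCM. The operative observation is that $H^2_\maxim(R)\cong\bigoplus_n H^1(\op{Proj}R,\loc(n))$ has finite length by Serre vanishing and duality on the projective surface $\op{Proj}R$, and Frobenius rescales graded degrees by a factor of $p$, so high Frobenius powers eventually drive this cohomology out of its finite support; one then builds the small MCM from a suitable Frobenius twist. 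Your core idea --- Serre vanishing plus Frobenius controlling $H^2_\maxim$ --- is correct; the resolution $Y$ is an unnecessary detour.
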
 

So the two major open problems are: ($\text H_{\text{big}}$) the existence of big MCM's in mixed \ch\ in dimensions four and higher; and ($\text H_{\text{small}}$) the existence of small MCM's, in any \ch,  in dimension three and higher.\footnote{The one known three-dimensional  (graded, positive \ch) is really the case of a cone over  a two-dimensional variety; likewise, the  few known higher-dimensional cases are somehow derived from lower dimensional cases, like \cite{HanCM}.} The most pressing of these is   ($\text H_{\text{big}}$), as many homological conjectures are still outstanding in mixed \ch. On the other hand, Hochster seems skeptical about ($\text H_{\text{small}}$) if $R$ does not have positive \ch. We will formulate a slightly stronger version of ($\text H_{\text{small}}$) and dissuade Hochster's distrust in equal \ch\ zero. Namely, let us say that an MCM is \emph\vs\  if its multiplicity is at most the equi-parameter degree of $R$. Recall that the \emph{parameter degree} $\op{pardeg}(R)$ of $R$ is the smallest possible length of $R/I$ where $I$ varies over all parameter ideals (=ideals generated by a system of parameters); in mixed \ch\ $p$, we must use instead the \emph{equi-parameter degree} $\op{epardeg}(R)$, which is the  smallest possible length of $R/I$ where $I$ varies over all parameter ideals containing $p$. Since the parameter degree of $R$ is equal to its multiplicity \iff\ $R$ is \CM,  local \CM\ rings trivially admit a \vs\ MCM, and I now conjecture more generally:

\begin{conjecture}\label{C:ssMCM}
Any complete local ring admits a \vs\ MCM.
\end{conjecture}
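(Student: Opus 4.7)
The plan is to reduce the conjecture to the task of producing an MCM of generic rank one on each top-dimensional component of $R$, and then to attack the residual problem by the standard mix of positive-\ch\ methods and lifting techniques.

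After the usual faithfully flat reduction to the analytic setting, I would rewrite the multiplicity bound using the associativity formula,
\[
e(M) \;=\; \sum_{P \in \op{Assh}(R)} \ell_{R_P}(M_P) \cdot e(R/P),
\]
and observe that, at least when $R$ is reduced and equidimensional, one has $e(M) = e(R)$ whenever $M$ is an MCM of generic rank one at each minimal prime. Since $e(R) \leq \op{pardeg}(R)$ always, very smallness then reduces to the existence of a small MCM of generic rank one; the non-reduced and non-equidimensional cases require only mild adjustments to deal with embedded and lower-dimensional primes.

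In equal positive \ch\ I would try to extract such an MCM from Hochster--Huneke's big \CM\ algebra $R^+$. Since $R^+$ is a domain lying over $R$, any finite $R$-submodule containing $R$ automatically has generic rank one; the only remaining task is to choose it to be balanced MCM. This is precisely the small MCM conjecture in positive \ch, and the rank-one refinement comes for free from this construction. In equal \ch\ zero one would then transfer the conclusion via the (EQUI$\mathstrut_0$) mechanism previewed in the abstract: formally etale extensions preserve both the multiplicity of $M$ and the parameter degree of $R$, so very smallness descends. The mixed-\ch\ case is handled analogously where possible, combining Heitmann's low-dimensional results with reduction mod $p$.

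The main obstacle is, unsurprisingly, the small MCM conjecture itself in dimension at least three: no mechanism is presently known, even in positive \ch, to cut a finitely generated balanced MCM out of $R^+$. Once such a small MCM is in hand, however, the generic-rank-one refinement appears almost cosmetic, so I expect essentially all the difficulty to lie in the MCM construction itself and not in the multiplicity bound. The novelty of the conjecture is precisely to insist that a rank-sensitive construction would simultaneously yield the multiplicity bound and thereby provide uniform control on the size of the resulting module.
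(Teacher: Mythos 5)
The statement you were asked to prove is labeled \textbf{Conjecture}~\ref{C:ssMCM} in the paper and is genuinely open: the paper does not prove it, and only verifies it for the specific toric families $\mathcal T_{d,n,m}(\kappa)$ (Theorems~\ref{T:quadHochstergen} and~\ref{T:binommHoch}), and even there by an explicit Frobenius/$q$-adic-expansion calculation inside $\frob{q*}R$, not by the route you sketch. So there is no ``paper proof'' to match; what can be assessed is whether your reduction strategy is sound and whether you supply anything new.

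The first half of your reduction is correct and worth stating: by the associativity formula, if $R$ is reduced and $M$ is a small MCM with $\ell_{R_P}(M_P)\le 1$ for every $P\in\op{Assh}(R)$, then $\op{mult}(M)\le\sum_{P\in\op{Assh}(R)}\op{mult}(R/P)=\op{mult}(R)\le\op{epardeg}(R)$, so $M$ is automatically \vs. (In the non-reduced case $\sum_P\op{mult}(R/P)\le \op{mult}(R)$, so the inequality only improves.) This is a clean way of packaging why the \vs{}ness refinement should not cost much once a small MCM is available, and it is consistent with the paper's examples, where $Q\iso S^m$ is free of rank $m$ over the Noether normalization $S\into R$ with $[R:S]=m$, hence of generic rank one over $R$.

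However, the next step contains a concrete error: a finite $R$-submodule of $R^+$ containing $R$ does \emph{not} automatically have generic rank one. If $\alpha\in R^+$ is integral of degree $n>1$ over the fraction field $K$, then $R[\alpha]$ is a finite $R$-submodule of $R^+$ containing $R$ whose rank is $[K(\alpha):K]=n$. Generic rank one submodules of $R^+$ are rather special (they are essentially fractional ideals), and nothing in the Hochster--Huneke construction forces a balanced MCM $R$-submodule of $R^+$ to be one. So ``the rank-one refinement comes for free'' does not hold as stated; the paper instead achieves rank one by working inside $\frob{q*}R$, which already has generic rank one over $R$, and there the issue becomes freeness over $S$ rather than rank.

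Beyond that, you are candid that the core obstruction is the small MCM conjecture itself in dimension $\ge 3$, and your proposal does not touch it. The paper's actual content here is orthogonal to your plan: it formulates the \vs{} strengthening precisely because that strengthening is what permits the ultraproduct transfer to equal characteristic zero (\Thm{T:Hochequi}), and it provides positive evidence via explicit computations for the bi-partite toric families. The mixed-characteristic clause of your plan (``combining Heitmann's low-dimensional results with reduction mod $p$'') is also too thin: reduction mod $p$ destroys a dimension, and the paper's mixed-characteristic machinery (Witt transforms, \Thm{T:Wittbcm}) produces only \emph{big} MCMs, so it says nothing about the small/\vs{} conjecture in mixed characteristic. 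In short: the associativity reduction is a correct observation, the $R^+$ rank-one claim is false, and the substance of the conjecture remains untouched.
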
 

It is the latter form of ($\text H_{\text{small}}$) that can be lifted to equal \ch\ zero, by an ultraproduct argument (which requires the residue field to be uncountable; see  \S\ref{s:mix}):
 
\begin{theorem}\label{T:Hochequi}
If every $d$-dimensional  \an\ ring  of positive \ch\ admits a \vs\ MCM, then so does any   $d$-dimensional    \an\ ring  of equal \ch.
\end{theorem}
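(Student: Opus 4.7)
The plan is to deduce the equal characteristic zero case from the positive characteristic hypothesis via an ultraproduct argument in the Lefschetz-hull framework of \cite{SchClassSing}; in equal positive characteristic the theorem is tautological, so I may assume $R$ has characteristic zero, with $d$-dimensional \an\ structure and uncountable algebraically closed residue field.

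First, I would realize $R$ (up to a harmless faithfully flat scalar extension, for which the existence of a \vs\ MCM is equivalent) as the cataproduct of a family of $d$-dimensional \an\ local rings $(R_w, \maxim_w)$ of positive characteristics $p_w$. Fix a parameter ideal $\tuple xR = (x_1, \dots, x_d) R$ realizing $\op{pardeg}(R) = \ell(R/\tuple xR) =: c$, and lift it to parameter sequences $\tuple x_w$ in $R_w$. By \los, $\ell(R_w/\tuple x_w R_w) = c$ for almost all $w$, so $\op{pardeg}(R_w) \leq c$.

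Applying the positive characteristic hypothesis componentwise yields, for almost all $w$, a \vs\ MCM $M_w$ over $R_w$ with $e(M_w) = \ell(M_w/\tuple x_w M_w) \leq c$. Since $M_w$ is MCM, Nakayama converts this length bound into a uniform bound of $c$ on the minimal number of generators of $M_w$, independent of $w$. This uniformity is the crux: the ultraproduct $M_\infty$ descends through the cataproduct construction to a finitely generated $R$-module $M$. Transferring $M_w$-regularity of $\tuple x_w$ via \los\ shows that $\tuple x$ is $M$-regular, so $M$ is MCM over $R$, and a parallel length transfer gives $e(M) = \ell(M/\tuple xM) \leq c = \op{pardeg}(R)$.

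The main obstacle is the last step: the reliable passage from the naive ultraproduct $M_\infty$ --- an enormous non-Noetherian module over a non-Noetherian ring --- to a genuine finitely generated $R$-module $M$ whose depth and length modulo $\tuple x$ are inherited from those of the $M_w$. The uniform generator bound furnished by the \emph{very small} multiplicity condition is precisely what makes this descent tractable; without it, lengths could blow up in the cataproduct and no finitely generated MCM would emerge. This is also the reason the present argument lifts the \emph{very small} form of the conjecture rather than the bare existence of small MCM's: the multiplicity bound is what propagates through the ultraproduct, and is indispensable at the passage from $M_\infty$ to $M$.
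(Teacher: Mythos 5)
Your overall strategy is the same as the paper's: pass to a Lefschetz hull, apply the positive-characteristic hypothesis termwise to ultraproduct factors of the same parameter degree, use the very-small bound to keep the generator count uniform, and descend through the cataproduct. But the pivotal step is exactly where your argument has a gap. You write ``Transferring $M_w$-regularity of $\tuple x_w$ via \los\ shows that $\tuple x$ is $M$-regular,'' but \los\ controls the \emph{ultraproduct} $\ul M$, not the \emph{cataproduct} $\cp M$. Regularity on $\ul M$ does not automatically survive the quotient by infinitesimals: if $x\,\ul m$ lies in $\bigcap_n \ul\maxim^n\ul M$, one only knows $x m_w\in\maxim_w^n M_w$ for almost all $w$ \emph{for each fixed $n$}, with the index set depending on $n$, and without a uniform Artin--Rees input this does not force $\ul m$ itself to be infinitesimal. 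You correctly flag the passage from $M_\infty$ to $M$ as ``the main obstacle,'' but you attribute the danger to lengths blowing up, whereas the real issue is the transfer of depth, and the uniform generator bound alone does not resolve it.

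The paper closes this gap with a stronger structural fact that you do not invoke: using \cite[Prop.~3.5]{SchABCM}, one chooses a $d$-dimensional regular Noether normalization $S_w\sub R_w$ with $\ell(R_w/\mathfrak n_w R_w)=\rho$; since $M_w$ has depth $d$, Auslander--Buchsbaum makes $M_w$ \emph{free} over $S_w$, say $M_w\iso S_w^{N_w}$, and the very-small bound forces $N_w\leq\rho$, hence constant (say $N$) almost everywhere. Freeness of fixed rank over a regular local subring is precisely the property that passes cleanly through the cataproduct: $\cp Q\iso\cp S^{N}$, $\cp S$ is a regular local ring mapping finitely into $\cp R$, and hence $\cp Q$ has depth $d$ over $\cp R$. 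Your proof would become correct if you added this freeness-over-a-Noether-normalization step in place of the unsupported Los-transfer of regularity; as written, the depth claim for $M$ is not justified.
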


So, we should focus on problem ($\text H_{\text{small}}$) for \an\ ring{s} of positive \ch, under the additional \vs{}ness condition, and we will now describe a class of \an\ ring{s} for which we can show the validity of the conjecture.

 Recall that an \emph{affine toric variety} over a field $\kappa$  is an irreducible variety containing a torus $(\kappa^*)^n$ as a Zariski open subset such that the action of the torus on itself extends to an algebraic action on the whole variety. Hochster \cite{HoToric} proved that a normal toric variety is automatically \CM. Since the integral closure of a toric variety is again toric, it follows that
 any local ring  of a toric variety admits a small MCM. To generalize this, we use the following algebraic characterization of toric varieties (see, for example, \cite[\S1.1]{CLS}):    the ideals defining affine toric varieties are precisely the prime ideals generated by pure binomials (=polynomials in the variables $\tuple y:=\rij yn$ of the form $\tuple y^\alpha\pm\tuple y^\beta$ with $\alpha,\beta\in\nat^n$).\footnote{Following common practice, we write $\tuple y^\alpha$  for $y_1^{a_1}\cdots y_n^{a_n}$, for $\alpha=\rij an$.}
  In \cite{EisStu}, this was taken as the departing point of investigating general \emph{toric} or \emph{binomial} ideals in a   polynomial   ring over a field as those ideals generated by binomials (with arbitrary non-zero coefficients, not just $\pm1$). 
We consider the following local version: 
\begin{definition}\label{D:tor}
Given an unramified\footnote{Complete ramified regular local rings might not be power series rings, a technical complication I prefer to avoid, although I do not know how essential it is.} regular local ring $S$ and a regular system of parameters $\tuple y$ in $S$, call an ideal $I\sub S$ \emph{toric} (with respect  to the given regular system of parameters), if it is generated by `binomials' of the form $\tuple y^\alpha-u\tuple y^\beta$, where $u\in S^*$ is a unit in $S$. More generally, if $\Xi\sub S^*$ is a subgroup, then we call the ideal \emph{$\Xi$-toric}, if all coefficients $u$ belong to $\Xi$. In particular, if $\Xi=\{\pm1\}$,  we call the ideal \emph{purely toric}. The corresponding quotient $S/I$ will be called a ($\Xi$-)\emph{toric local ring}.
\end{definition} 
%
%
  Since we work locally, there are  many more units, and so we get a much larger class than in the affine case (e.g., not only the cusp $0=x^2-y^3$ but also the node $0=x^2-y^2-y^3=x^2-(1+y)y^2$ is toric at the origin).  We are mainly interested in \an\ toric rings, that is to say,  when $S$ is a power series ring, either over   an  uncountable \acf\ or over a complete $p$-ring, and so we would like to   prove that any  toric \an\ ring admits  a small MCM.
 
\subsection*{Mixed \ch} We give the following   positive solution of ($\text H_{\text{big}}$):

\begin{theorem}\label{T:mixtorMCM}
Every purely toric \an\ ring in mixed \ch\ admits a big MCM.
\end{theorem}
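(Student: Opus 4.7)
The plan is to prove the theorem by showing that the normalization $\overline R$ of $R$ is a module-finite Cohen--Macaulay extension, which then automatically provides a small (and in particular big) MCM module over $R$. Writing $R = S/I$ with $S = W[[y_2,\ldots,y_n]]$, $y_1 := p$, and $I$ generated by binomials $\tuple y^\alpha \pm \tuple y^\beta$, the strategy is to reduce the $\pm$ purely toric structure to a classical toric one, where Hochster's theorem on normal affine semigroup rings applies.

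First I would reduce to the case of a $\Xi$-toric domain (with $\Xi\subset W^*$ a finite group of roots of unity, generally larger than $\{\pm 1\}$). Writing $I = \mathfrak p_1 \cap \cdots \cap \mathfrak p_r$ as a minimal primary decomposition, I would argue that each $\mathfrak p_i$ is itself $\Xi$-toric. The key observation is that a purely toric binomial may factor further in $S$: for instance, $y_1^2 + y_2^2 = (y_1 - iy_2)(y_1 + iy_2)$ where $i\in W$ is a square root of $-1$, which exists by Hensel's lemma since the residue field is algebraically closed and contains $\sqrt{-1}$ (and trivially if $p=2$). More generally, purely toric binomials factor as products of binomials with coefficients in a finite group of roots of unity, so the minimal primes inherit a $\Xi$-toric structure.

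Next I would absorb the finite-group coefficients by adjoining suitable roots of the parameters to $S$. A module-finite extension of $S$ adjoining appropriate roots of the $y_j$'s converts a $\Xi$-toric ideal into a classical toric ideal generated by pure differences $\tuple z^{\alpha'} - \tuple z^{\beta'}$. The normalization of the resulting classical toric local ring is a normal affine semigroup ring, which by Hochster's theorem \cite{HoToric} is Cohen--Macaulay. Combining the finitely many branches, the normalization $\overline R$ is a finite product of Cohen--Macaulay rings, module-finite over $R$ and of the same dimension, so it has depth $\dim R$ as an $R$-module and is an MCM.

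The main obstacle I anticipate is the coefficient-absorption step: one must explicitly construct the finite extensions that trivialize the $\Xi$-action while preserving the mixed characteristic structure, in particular keeping $p$ a regular parameter after adjoining roots. A further subtlety arises when $p$ divides the order of $\Xi$, since $p$-power roots of unity require ramified extensions of $W$; one must then verify that Hochster's toric theorem still applies after such a ramified extension, and that the normalization remains module-finite over $R$ of the same dimension. If this latter issue cannot be handled cleanly, a weaker argument producing only a big (non-finitely-generated) MCM --- say by combining with an absolute integral closure of each branch --- would still suffice for the conclusion as stated.
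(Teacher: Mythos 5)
Your approach, if it worked, would prove much more than the theorem claims: it would produce a \emph{small} (finitely generated) MCM in mixed characteristic for purely toric rings, namely the normalization. That is precisely what the paper carefully does \emph{not} assert, and the gap is fundamental. Hochster's theorem that normal affine semigroup rings are Cohen--Macaulay is a statement about rings of the form $\kappa[\Gamma]$ with $\kappa$ a field. In the mixed characteristic setting, $R$ is a quotient of $S=V[[y_2,\dots,y_n]]$ with $V$ a complete $p$-ring, and the prime $p$ plays the role of one of the ``toric variables.'' The normalization of $R$ is not a semigroup ring over a field, and there is no known analogue of Hochster's toric CM theorem over a mixed characteristic DVR base. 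Indeed, proving that normalizations (or any finite extensions) of mixed characteristic toric rings are CM would settle a case of the small MCM conjecture that is wide open. The coefficient-absorption step only sharpens the problem: to absorb roots of unity you would need to adjoin roots of $p$, which ramifies $V$ --- a setting the paper explicitly avoids (see the footnote in Definition~\ref{D:tor}) --- and even after doing so you are still not over a field. Your fallback of ``combining with an absolute integral closure of each branch'' also fails as written: at the time of this paper, $R^+$ was not known to be a big MCM algebra in mixed characteristic in dimension $\geq4$ (Hochster--Huneke is equal characteristic; Heitmann/Hochster handle only dimension $\leq3$), and you give no mechanism for circumventing this.

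The paper's actual route is quite different and sidesteps all of this. It forms the \emph{Witt transform} $\Witt R$ via the Teichm\"uller lift $\tau\colon \bar R\to\wi{\bar R}$, shows in Lemma~\ref{L:wittlike} and Theorem~\ref{T:Wittbcm} that any Witt-closed ring admits a big CM algebra by mapping $\Witt R$ into $B_d=\wi{\bar S_d^+}$ (the Witt ring of the absolute integral closure of the \emph{characteristic $p$} reduction, where Hochster--Huneke \emph{does} apply, via Proposition~\ref{P:bicCMpow}), and then verifies that purely toric $R$ are Witt-closed precisely because $\tau$ is multiplicative and sends $\pm1$ to $\pm1$, so a pure binomial relation $\tuple y^\alpha=\pm\tuple y^\beta$ in $\bar R$ lifts to $\tau(\tuple y)^\alpha=\pm\tau(\tuple y)^\beta$ in $\wi{\bar R}$. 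The output is a big, non-finitely-generated MCM algebra --- consistent with the statement --- and the mixed characteristic difficulty is transferred to a characteristic $p$ ring where the known theorem is available.
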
 

We prove in fact a more general theorem (\Thm{T:Wittbcm}): using Witt vectors, we define the \emph{Witt transform} $\Witt R$ of an \an\ ring $R$; it is again  \an, of the same dimension, and we call $R$ \emph\Wlike, if $R\iso \Witt R$. We then show that any \Wlike\ local ring admits a big MCM. The theorem follows since the multiplicativity of the Teichmuller character implies that   purely toric rings are \Wlike. There is a more general class of toric rings for which we can prove \Thm{T:mixtorMCM}, which we will discuss below. It should be pointed out though that our construction is   different from the one in \cite{HoWitt}.

\subsection*{Equal \ch}   Fix an uncountable \acf\ $\kappa$ of \ch\ $p\geq 0$. The  $\kappa^*$-toric case can be reduced to  the affine case, using Hochster's result (where $\kappa^*$ is the multiplicative group of $\kappa$).
However, we will show that  for a toric domain $R$ with $p>0$, even in the affine case, there is a  candidate for an MCM which is usually smaller  than its normalization---in particular, it is \vs---,  namely the ring $\pint{}R$ of \emph{F-integral elements} (=the elements in its field of fraction such that some $p^n$-th  power   lies in $R$); see \Conj{C:afftorpowint} for a \ch-free variant.
%
   To have such `smaller' MCM's might be advantageous for  calculating intersection forms, as we will briefly explain in \S\ref{s:intform}. Although we cannot prove the result in general, we will prove it for certain families of   toric \an\ rings.
%
More concretely, consider the following families $\mathcal T_{d,n,m}(V)$ of  \an\ ring{s}: let  $V$ be     either  a field or a complete $p$-ring, and let $\mathcal T_{d,n,m}(V)$ consist of all $R:=\pow S{\tuple u}/I$, where $S:=\pow V {y_1,\dots,y_d}$ and  $\tuple u=\rij un$, where $\tuple y$ is either $\rij yd$ (equal \ch\ case) or $(p,y_1,\dots,y_d)$ (mixed \ch\ case), and where $I$ is a  toric ideal generated by
\begin{description}
\item[($n\geq 2$,  $m=2$)]   binomials of the form $u_i^2-a_i\tuple y^{\alpha_i}$ and $u_iu_j-b_{ij}\tuple y^{\beta_{ij}}$, with $a_i,b_i$  units in $S$ such that $a_ia_j=b_{ij}^2$   and $\alpha_i+\alpha_j=2\beta_{ij}$;
\item[($n=2$, $m$ odd)]     binomials of the form $u_i^m-a_i\tuple y^{\alpha_i}$, for $i=1,2$, and $u_1u_2-b\tuple y^\beta$, $a_1, a_2,b$  units in $S$ such that $a_1a_2=b^m$  and $\alpha_1+\alpha_2=m\beta$.
\end{description}

Note that the conditions on the $a_i$ and $\alpha_i$ guarantee that in either case the ideal $I$ has height $n$ (whence $\dim R=d+\dim V$) and $S\sub R$ is finite,  yielding a `toric' Noether normalization. 
We prove the following:

\begin{theorem}\label{T:dne}
For $R$ an \an\ ring belonging to  $\mathcal T_{d,n,m}(V)$,
\begin{enumerate}
\item[(EQUI)] if $V$ is an uncountable \acf\ whose \ch\ does not divide $m$, then $R$ admits a \vs\ MCM;
\item[(MIX)] if $V$ is a complete $p$-ring, then $R$ admits a big MCM.\footnote{Although these are classes of non-purely toric ideals, in mixed \ch, \Thm{T:mixtorMCM} also applies to them by a specialization argument, \Thm{T:torNNmix}.}
\end{enumerate}
\end{theorem}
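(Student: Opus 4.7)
For part (EQUI), the plan is to absorb the unit coefficients $a_i,b_{ij}$ into the generators via $m$-th roots so that $R$ becomes the completion of an affine toric $\kappa$-algebra in the classical sense, and then take its normalization as the \vs\ MCM. Since $\op{char}(\kappa)\nmid m$ and $\kappa$ is algebraically closed, Hensel's lemma yields an $m$-th root in $S^{*}=V[[\tuple y]]^{*}$ of every unit of $S$. The substitution $u_i\mapsto a_i^{1/m}\,u_i$ turns each $u_i^m-a_i\tuple y^{\alpha_i}$ into $u_i^m-\tuple y^{\alpha_i}$; the compatibility identities $a_ia_j=b_{ij}^2$ (resp.\ $a_1a_2=b^m$) together with the exponent conditions then force the remaining binomials to become $u_iu_j-\zeta_{ij}\tuple y^{\beta_{ij}}$ with $\zeta_{ij}\in\kappa$ an $m$-th root of unity. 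Computing $u_i^2u_ju_k$ in two ways forces the cocycle identity $\zeta_{ij}\zeta_{ik}=\zeta_{jk}$, which splits as $\zeta_{ij}=\zeta_i\zeta_j$ (take $\zeta_i:=\zeta_{1i}$); a final substitution $u_i\mapsto\zeta_i u_i$ normalizes all coefficients to $1$, realizing $R$ as the completion at the origin of a classical affine toric $\kappa$-algebra $A$ in the sense of \cite{HoToric}.

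The toric ideal cutting out $A$ is radical by the binomial primary decomposition of \cite{EisStu} applied to the specific shape of our generators, so $R$ is reduced; let $\bar R$ denote its normalization in the total ring of fractions, equivalently the product of normalizations of the irreducible components. Hochster's theorem on normal toric rings \cite{HoToric} then shows that each normalized component is Cohen-Macaulay, hence so is $\bar R$. Being module-finite over the regular local ring $S$ of the same Krull dimension, Auslander-Buchsbaum makes $\bar R$ free as an $S$-module, so any sop $\tuple y$ of $S$ is $\bar R$-regular and simultaneously an sop of $R$; thus $\bar R$ is MCM over $R$. Since $\bar R/R$ has dimension strictly less than $\dim R$ (the conductor is generically a unit on each component), the multiplicity identity $e_R(\bar R)=e_R(R)$ holds, and combined with $e_R(R)\leq \ell(R/\tuple zR)$ for every sop $\tuple z$ of $R$, this gives $e_R(\bar R)\leq\op{pardeg}(R)$, so $\bar R$ is \vs.

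For part (MIX), although $R$ is not generally purely toric, \Thm{T:torNNmix} supplies the specialization argument needed to transfer existence of a big MCM from the purely toric model of $R$ (obtained by specializing each unit coefficient to $\pm 1$) up to $R$ itself; combined with \Thm{T:mixtorMCM}, which establishes big MCMs for all purely toric rings in mixed characteristic, this yields the desired big MCM for $R$.

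The main obstacles lie in (EQUI): verifying, first, that the cocycle $\zeta_{ij}$ is always a coboundary---automatic from the compatibility hypotheses but requiring some bookkeeping when $n$ is large---and, second, that the toric ideal in our class is actually radical, which is needed in order that $R\hookrightarrow\bar R$ and the key multiplicity identity $e_R(\bar R)=e_R(R)$ make sense. In (MIX) essentially all the work is deferred to the cited theorems \Thm{T:torNNmix} and \Thm{T:mixtorMCM}.
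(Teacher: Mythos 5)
Your treatment of (MIX) is the same as the paper's: you cite \Thm{T:torNNmix} together with \Thm{T:mixtorMCM}, which is exactly what the paper's footnote does, so that part is fine (though it defers all the content to those cited results).

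For (EQUI) you take a genuinely different route from the paper, and it has a fatal gap. The paper constructs the very small MCM directly in positive characteristic as the $S$-saturation $Q=\sat S{*1}{\frob{q*}R}$ inside the Frobenius transform and proves, by an elementary $q$-adic digit computation, that $Q$ is free of rank $m$ over the Noether normalization $S$ (\Thm{T:quadHochstergen} for $m=2$, \Thm{T:binommHoch} for $m$ odd); the equal characteristic zero case is then obtained by the ultraproduct transfer of \Thm{T:Hochequi} and \Rem{R:Hochequi}. You instead normalize the unit coefficients to $1$ (your cocycle bookkeeping is correct) and then try to use Hochster's theorem on normal affine toric rings by taking the integral closure $\bar R$. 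The step that fails is the assertion that the defining toric ideal is radical: it is almost never radical for these families, a point the paper itself makes explicitly (``the rings $R$ in $\mathcal T_{d,n,m}(\kappa)$ are not domains, but they all admit a $d$-dimensional minimal prime ideal $\pr$''). Concretely, in $\mathcal T_{1,2,2}(\kappa)$ take $a_1=a_2=b_{12}=1$ and $\alpha_1=\alpha_2=\beta_{12}=2$, so that $I=(u_1^2-y^2,\,u_2^2-y^2,\,u_1u_2-y^2)$. Then $(u_1-u_2)^2=u_1^2-2u_1u_2+u_2^2\equiv 0\pmod{I}$, so $u_1-u_2\in\sqrt{I}$, yet $u_1-u_2\notin I$ because $I\subseteq\maxim^2$. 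An analogous computation for $m$ odd (e.g.\ $(v-u^2)^3\in I$ in $\mathcal T_{1,2,3}(\kappa)$ with $\alpha=1,\beta=2,\gamma=1$) shows the same failure. Eisenbud--Sturmfels provides a primary decomposition, not automatic radicality.

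Once $R$ is not reduced, ``$R\hookrightarrow\bar R$'' is false, ``$\bar R/R$'' is undefined, and the multiplicity identity $e_R(\bar R)=e_R(R)$ together with the dimension argument supporting it collapse. The repairable version of your idea would pass to $R/\pr$ for the distinguished $d$-dimensional minimal prime $\pr$, normalize that, and then argue the result is $S$-free of the same $S$-rank as $R/\pr$; this would produce a module of the same multiplicity as the paper's $Q$ (the paper notes in \Examp{E:e3} that the normalization can be strictly larger than $Q$ as a ring even when the $S$-ranks agree). But that repair still requires: (i) identifying $\pr$ (cf.\ \Rem{R:nonintdom}); (ii) knowing that the normalization of the \emph{completed} local toric domain $R/\pr$ is Cohen--Macaulay, which needs Hochster's theorem plus compatibility of normalization with completion for excellent rings; and (iii) the multiplicity bound $\op{mult}\leq\op{pardeg}(R)$, which the paper obtains from the freeness of $Q$ over $S$ together with \Lem{L:torNN} and \cite[Theorem 14.13]{Mats}. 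None of these points is addressed in your proposal, so the (EQUI) argument as written does not go through.
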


For the remainder of the introduction, I will now explain the first case, when $V=\kappa$ is an uncountable \acf, and by a variant of \Thm{T:Hochequi} (see \Rem{R:Hochequi}), we only need to consider the case that the \ch\ $p$ is positive. The rings $R$ in $\mathcal T_{d,n,m}(\kappa)$ are not domains, but they all admit a $d$-dimensional, minimal prime ideal $\pr$ which is  toric. We will show that the F-integral elements of $R/\pr$ form a small MCM over $R/\pr$ whence over $R$. (For a domain $A$ of \ch\ $p$, we call an element $f$ in its field of fractions \emph{F-integral}, if $f^q\in A$ for some $q=p^n$.) However, it is useful to have a construction that does not require the ring to be a domain, and for that, we have to look at 
%
%
the \emph{\fr} of $R$: viewing $R$ as a module over itself via the Frobenius $\frob p\colon R\to R$, we will denote it as $\frob*R$; likewise, elements of $\frob*R$ will be denoted by $*r$, so that the scalar action is given by $s{*}r=*s^pr$. Our desired MCM  will be an $R$-submodule $Q$ of $\frob *R$  (where for small  $p$ we may have to take instead some iterate of $\frob p$). Concretely, $Q$ is the $S$-saturation of the (cyclic) $R$-submodule   generated by $*1$: namely,  the $S$-submodule of $\frob*R$ consisting of all $*r$ such that $s{*}r=a{*}1$ for some non-zero $s\in S$ and $a\in R$. It follows that $Q$ is again an $R$-module, but the surprising fact is that it is free as an $S$-module (of rank $m$), whence a \vs\ MCM (the proof of \Thm{T:Hochequi} then automatically also proves the equal \ch\ zero case). However, this freeness result is still an enigma: I can write down   explicitly $m$ generators over $S$ (which are easily seen to be linearly independent over $S$), but I do not know how to find these generators for more complicated toric ideals. At the moment, everything relies on some basic congruence (arithmetic) relations, but it is not clear what these would become in a more general setting. 

The toric rings in the families $\mathcal T$ are in fact special cases of a more general construction of toric rings: recall that in the affine case, a toric ideal can also be defined in terms of a (partial) character defined on a sublattice $\Gamma\sub\zet^n$ (\cite[Theorem 2.1]{EisStu}). We will consider below the case where $\Gamma$ is moreover the graph of a morphism (the resulting binomial equations are then `bi-partite', that is to say, of the form $\tuple x^\alpha-u\tuple y^\beta$, with $(\tuple x,\tuple y)$ a fixed partition of the variables). Although I cannot show (EQUI) in \Thm{T:dne}  for this larger class of rings, I give an example where we verify the conjecture. On the other hand, we do verify  (MIX) in general for this class. 

A note of caution: we cannot expect that for non-toric \an\ domains, their ring of F-integral elements is always MCM: Bhatt \cite{BhNonCM} has given examples  in positive \ch\footnote{In equal \ch, normal domains split off in any finite extension, and so does their local cohomology, so no finite extension of a non-\CM\ normal domain can be \CM.} of normal \an\ domains $R$ that do not admit a small MCM which in addition carries the structure of an $R$-algebra. In a future paper, we will give a new condition (involving local cohomology) for the existence of a small MCM and conjecture that this condition can be realized inside the \fr\ $\frob*M$ of some, possibly non-free, $R$-module $M$.


\section{Hochster's  MCM conjectures and Hochster rings}
Recall that a \emph{scalar extension} of local rings $(R,\maxim)\to (S,\mathfrak n)$ is a formally etale \homo, that is to say, faithfully flat and  unramified (meaning that $\maxim S=\mathfrak n)$. In particular, $R$ and $S$ have the same dimension and depth. Moreover, if $\kappa$ is the residue field of $R$ and $\lambda$ any field extension of $\kappa$, then there exists a unique scalar extension $\scal R{\lambda}$ of $R$ which is complete and has residue field $\lambda$ (see,  for instance, \cite[\S3]{SchClassSing}). The philosophy for  allowing scalar extensions is that they do not change the singularities and that any property that descends under faithfully flat maps is inherited from the scalar extension.

Let us therefore for call a local ring $R$   \emph{Hochster}, if some scalar extension of $R$ admits a small  MCM, that is to say, a finitely generated module of   depth equal to the dimension of $R$. It is well-known that any  two-dimensional local ring is Hochster. By work of Hartshorne, Peskine-Szpiro, and Hochster, any three-dimensional graded domain in positive \ch\  is Hochster (see, for instance \cite[Corollary 2]{HoCurr}). We will call $R$ \emph{strongly Hochster}, if some scalar extension of $R$ admits a very small MCM. Given two local rings $S$ and $R$, let us say that $R$ is a \emph\qdef\ of $S$, if there is a diagram  
\begin{equation}\label{eq:qdef}
\xymatrix{
&S\ar@{->>}[d]\\
R\ar@{^{(}->}[r]^g&\bar S\\
}
\end{equation} 
with $g$ finite and injective and  $\bar S=S/\rij xnS$ with $\rij xn$ part of a system of parameters in  $S$. 
We sometimes refer to the whole diagram~\eqref{eq:qdef} as a \qdef. The completion of a   \qdef\ is again a  \qdef, as is any flat base change.

\begin{lemma}\label{L:qdefHoc}
Let $R$ be a \qdef\ of $S$. 
If $S$ admits a big MCM, then so does $R$. If $S$ is moreover Hochster, then so is  $R$.
\end{lemma}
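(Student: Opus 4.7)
The plan is to push a balanced big MCM on $S$ down to $\bar S = S/\tuple xS$ by killing the partial system of parameters $\tuple x = \rij xn$, then view the resulting module as an $R$-module through the finite injection $g\colon R\into \bar S$. For the Hochster half, I would base-change the whole quasi-deformation diagram along a common residue-field extension and apply the first part to the resulting diagram.

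Given a balanced big MCM $M$ over $S$, balancedness forces $\tuple x$ to be $M$-regular, so $\bar M := M/\tuple xM$ is a nonzero $\bar S$-module with $\dim_{\bar S}\bar M = \op{depth}_{\bar S}\bar M = \dim S - n = \dim \bar S$. Every system of parameters of $\bar S$ lifts, after appending $\tuple x$, to a system of parameters of $S$ and is hence $\bar M$-regular, so $\bar M$ is in fact balanced over $\bar S$. Since $g$ is finite and injective, $\dim R = \dim \bar S$ and $\maxim_R \bar S$ is $\maxim_{\bar S}$-primary, so dimension and depth of $\bar M$ over $R$ coincide with those over $\bar S$. Any system of parameters of $R$ generates an $\maxim_{\bar S}$-primary ideal in $\bar S$ using only $\dim \bar S$ generators and so remains a system of parameters of $\bar S$, hence $\bar M$-regular; this gives $\bar M$ the structure of a balanced big MCM over $R$. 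If $M$ is moreover small, then $\bar M$ is finitely generated over $\bar S$ and, by finiteness of $g$, also over $R$, producing a small MCM over $R$.

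For the Hochster statement, suppose a scalar extension $S \to S'$ admits a small MCM $M$, and write $\lambda$ for the residue field of $S'$; since $g$ identifies $\kappa_R$ with a subfield of $\kappa_{\bar S} = \kappa_S \sub \lambda$, I may form the complete scalar extension $R' := \scal R\lambda$. Taking $\bar S' := S'/\tuple xS'$, one verifies that $\bar S' \iso \bar S\otimes_R R'$ is a scalar extension of $\bar S$, that $R'\into \bar S'$ is still finite and injective, and that $\tuple x$ remains a partial system of parameters of $S'$; hence $R'\into \bar S'\twoheadleftarrow S'$ is again a quasi-deformation. Applying the first half of the lemma to this new diagram converts the small MCM $M$ over $S'$ into a small MCM over $R'$, exhibiting $R$ as Hochster.

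The main obstacle is the bookkeeping in this last step: ensuring that the triple $(R', S', \bar S')$ genuinely assembles into a quasi-deformation, that is, that forming scalar extensions commutes with the quotient $S\onto \bar S$ and respects the finite embedding $g$. Once these compatibilities are confirmed, the reduction back to the big-MCM case of the first paragraph is formal.
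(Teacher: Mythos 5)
Your proof follows essentially the same route as the paper's: reduce $Q$ modulo $\tuple x$, use balancedness plus the fact that a system of parameters of $R$ remains one in $\bar S$ (via finiteness of $g$) and lifts to one of $S$ after appending $\tuple x$, and then handle the Hochster statement by tracking everything through a common scalar extension. The only place you deviate is the intermediate claim ``$\bar S' \iso \bar S\otimes_R R'$'': this identity is neither needed nor, in general, correct when the residue field extension $\kappa_R\to\kappa_{\bar S}$ is nontrivial (the tensor product $\kappa_{\bar S}\otimes_{\kappa_R}\lambda'$ need not be a field, so $\bar S\otimes_R R'$ may fail to be local). The paper sidesteps this by first completing so that $S'\iso\scal S{\lambda'}$, then invoking functoriality of $\scal{\cdot}{\lambda'}$ directly: $R':=\scal R{\lambda'}\to \scal{\bar S}{\lambda'}=\bar S'$ is finite because $R\to\bar S$ is, and this already makes $\bar Q$ a finite $R'$-module. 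Replacing your tensor-product identification by this functoriality closes the bookkeeping gap you flagged; otherwise the argument is sound.
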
 
\begin{proof}
In either case,  we may replace $R$ by its completion and   assume form the start that we have a \qdef~\eqref{eq:qdef} with $R$  and $S$ complete. Let $\kappa$ and $\lambda$ be the respective residue fields of $R$ and $S$. Let $Q$ be an MCM  module over $S$, which we may assume to be balanced, and put $\bar Q:=Q/\rij xnQ$. Let $\rij yd$ be a system of parameters of $R$, which therefore is also   a system of parameters in $\bar S$. Let us continue to write $y_i$ for a choice of lifting in $S$. Hence $(x_1,\dots,x_n,y_1,\dots,y_d)$ is a system of parameters in $S$, whence is $Q$-regular. It follows that $\rij yd$ is $\bar Q$-regular, showing that $\bar Q$ is an MCM over $R$. If $S$ is moreover Hochster, then there exists a scalar extension $S\to S'$ such that $Q$ is a finite $S'$-module. We may replace $S'$ and $Q$ by their completion, and hence $S'\iso \scal S{\lambda'}$ by \cite[Proposition 3.4]{SchClassSing}, where $\lambda'$ is the residue field of $S'$. Therefore,   $R':=\scal R{\lambda'}$  is a scalar extension of $R$ and  $\bar Q$ is finitely generated as an $R'$-module.
\end{proof} 

Hence, in view of \Conj{C:FRMCM}, below, we may ask which complete local rings are \qdef{s} of  toric local  rings. Bhatt's counterexamples to the existence of small MCM algebras (\cite{BhNonCM}) shows that, if we believe in the conjecture, then not every local ring can be obtained this way.

\section{Calculating  intersection  forms using small MCM's}\label{s:intform}
As mentioned above, the existence of an MCM has major implications for the homological theory of a ring. In particular, many homological conjectures remain open in mixed \ch\ since we do not know yet the existence of MCM's in that case. For most applications, it suffices to find a big MCM, but there is at least one case where this is not enough: to prove the positivity conjecture of Serre for ramified regular local rings in mixed \ch\ (the remaining open case). To this end,  it suffices by faithfully flat descent to show  that local rings in mixed \ch\ are Hochster. But even in equal \ch\ it is useful to show that local rings are Hochster:  although the positivity conjecture is known in that case,   small MCM's can aid in calculating the intersection form, as I will now briefly explain. 

Let $X$ be a   variety, $Y,Z\sub X$ closed subvarieties, and $x$ a point in their intersection $Y\cap Z$. Suppose $x$ is a regular point on $X$, and the intersection at $x$ is \emph{in general position}, in the sense that the sum of the local dimensions of $Y$ and $Z$ at $x$ is at most the local dimension of $X$ at $x$ and $Y\cap Z$ is zero-dimensional at $x$. The latter means that $\loc_{Y,x}\tensor\loc_{Z,x}$ has finite length, and this finite length  $\ell(\loc_{Y,x}\tensor\loc_{Z,x})$ is called  the \emph{`naive' intersection multiplicity} of $Y$ and $Z$ at $x$. If $Y$ and $Z$ are curves in the plane, this leads to the correct Bezout formula, meaning that the sum over all intersections points is the product of the degrees of the curves. However, Serre realized that this is no longer true in higher dimensions due to the lack of \CM{ness}, and he suggested the following  \emph{local intersection multiplicity}:
\begin{equation}\label{eq:Serre}
\chi(Y,Z;x):=\sum_{i=0}^\infty (-1)^i\ell(\tor {\loc_{X,x}}i{\loc_{Y,x}}{\loc_{Z,x}})
\end{equation} 
Note that the sum is well-defined since all higher Tor's also must have finite length and vanish for $i$ bigger than the dimension of $X$. We need a definition that includes also modules, and for this it is easier to formulate everything in terms of local rings. Hence, 
let $(S,\mathfrak n)$ be a  $d$-dimensional regular local,
let $M$ and $N$ be $S$-modules such that $M\tensor N$ has finite length, meaning that $\ann {}M+\ann{}N$ is $\mathfrak n$-primary, and $\dim M+\dim N\leq d$. We then define their intersection form similarly as
\begin{equation}\label{eq:Serremod}
\chi(M,N):=\sum_{i=0}^d \tor SiMN.
\end{equation} 
Serre proved that  $\chi(M,N)\geq 0$, with  equality \iff\ $\dim M+\dim N<d$, whenever $S$ is equi\ch\ or unramified. To discuss the use of small MCM's, we need a vanishing theorem.


\begin{lemma}\label{L:Torvan}
Let $R$ be a $d$-dimensional local ring and let $M$ and $N$ be $R$-modules such that $M\tensor N$ has finite length. Then $\tor RiMN=0$ for all $i>\op{pd}(M)+\op{pd}(N)-d$.
\end{lemma}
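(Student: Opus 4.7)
The plan is to establish the sharper estimate $\tor RiMN = 0$ for every $i > \op{pd}(M) - \op{depth}(N)$, from which the stated bound follows via the Auslander-Buchsbaum formula $\op{depth}(N) + \op{pd}(N) = \op{depth}(R) = d$ (in the Cohen-Macaulay setting implicit in the intersection-theoretic applications). We may assume $p := \op{pd}(M)$ and $q := \op{pd}(N)$ are both finite; otherwise the stated bound is $\infty$ and there is nothing to prove. The crucial input is that the finite-length hypothesis on $M \otimes N$ propagates to every Tor: we have $\op{Supp}(\tor RiMN) \subseteq \op{Supp}(M) \cap \op{Supp}(N) = \{\maxim\}$, so each $\tor RiMN$ is itself of finite length.

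I then proceed by induction on $j := \op{depth}(N)$. The base case $j = 0$ is free, since $\tor RiMN$ already vanishes for $i > p$. For the inductive step $j \geq 1$, choose an $N$-regular element $x \in \maxim$ and feed the short exact sequence $0 \to N \xrightarrow{x} N \to N/xN \to 0$ into the Tor long exact sequence. Since $\op{depth}(N/xN) = j - 1$ and $M \otimes (N/xN)$ still has finite length (because $\op{Supp}(N/xN) \subseteq \op{Supp}(N)$), the inductive hypothesis applies and yields $\tor Rn{M}{N/xN} = 0$ for all $n > p - j + 1$. In particular $\tor R{i+1}{M}{N/xN} = 0$ whenever $i \geq p - j + 1$, so the long exact sequence collapses to show that multiplication by $x$ acts injectively on $\tor RiMN$.

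To close the induction, recall that $\tor RiMN$ has finite length and is therefore annihilated by a power of $\maxim$, and in particular by a power of $x$; an element that acts both injectively and nilpotently on a module forces that module to vanish. Hence $\tor RiMN = 0$ for all $i > p - j$, completing the induction. The proof is essentially the standard ``kill a regular element and decrease depth'' trick; the only delicate point is bookkeeping---checking that the two running hypotheses, $\op{pd}(M) < \infty$ and finite length of $M \otimes (-)$, genuinely survive the passage from $N$ to $N/xN$---and the support containment above handles this without modification.
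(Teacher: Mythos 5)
Your proof is correct and takes a related but genuinely different route from the paper's. Both arguments belong to the ``kill a regular element'' family, but the bookkeeping differs. You induct on $\op{depth}(N)$, take $x$ to be $N$-regular, feed $0\to N\xrightarrow{\ x\ }N\to N/xN\to 0$ into the Tor long exact sequence, and finish by observing that $x$ acts injectively yet nilpotently on $\tor RiMN$---nilpotently because, as you observe, each Tor module is supported only at $\maxim$. The paper instead runs a descent over a maximal counterexample: it takes $x$ to be $M$-regular, uses the finite-length hypothesis on $M\tensor N$ to adjust $x$ so that moreover $xN=0$, and applies the long exact sequence arising from $0\to M\xrightarrow{\ x\ }M\to M/xM\to 0$, with $\op{pd}(M/xM)=\op{pd}(M)+1$ pushing the index up by one. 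So you invoke finite length at the level of the Tor modules to obtain nilpotence of $x$, while the paper invokes it at the level of $M\tensor N$ to produce an $M$-regular element annihilating $N$. Your intermediate bound $i>\op{pd}(M)-\op{depth}(N)$ is the natural one over an arbitrary local base and coincides with $\op{pd}(M)+\op{pd}(N)-d$ exactly when $\op{depth}(R)=d$; you are right to flag the Cohen--Macaulay dependence, and in fact the paper's argument needs it as well, since its appeal to Auslander--Buchsbaum to extract $\op{depth}(M)>0$ from $\op{pd}(M)<d$ also requires $\op{depth}(R)=d$. As the only application, \Cor{C:intCM}, is over a regular local ring, both arguments serve the paper's purposes equally well.
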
 
\begin{proof}
There is nothing to prove if either module has infinite projective dimension, so assume the modules have moreover finite projective dimension. Let $i$ be maximal such that there exists a pair  of $R$-modules $M$ and $N$  of finite projective dimension $p$ and $q$ respectively, such that $M\tensor N$ has finite length  and $\tor RiMN\neq 0$. We need to show that $i\leq p+q-d$, so suppose not, that is to say, $i>p+q-d$.   
  Note that for $i>q$ we automatically have $\tor RiMN=0$, showing that necessarily $p<d$. By   Auslander-Buchsbaum therefore, $M$ must have positive depth (and likewise $N$). Let $x$ be an $M$-regular element. Since $M\tensor N$ has finite  length, some power $x^n$ lies in $\ann{}M+\ann{}N$, say, $x^n=a+b$ with $aM=0$ and $bN=0$. Hence replacing $x$ by $b$, we may moreover assume that $xN=0$. We have an exact sequence $\exactseq MxM{}{M/xM}$, which tensoring with $N$ gives part of a long exact sequence
\begin{equation}\label{eq:torMb}
\tor R{i+1}{M/xM}N\to\tor RiMN\map x\tor RiMN.
\end{equation} 
By Auslander-Buchsbaum, $\op{pd}(M/xM)=p+1$, and so $i+1>p+1+q-d$. Since $(M/xM)\tensor N$ too has finite length,  maximality of $i$   yields $\tor R{i+1}{M/xM}N=0$. On the other hand, $x$ is zero on $N$ whence on $\tor RiMN$, so that  \eqref{eq:torMb} implies that also $\tor RiMN=0$, contradiction.
\end{proof} 

MCM's come in via the following corollary, which implies that the `naive intersection form' is the correct one in the \CM\ case:

\begin{corollary}\label{C:intCM}
Let $S$ be a regular local ring of dimension $d$, let $M$ and $N$ be \CM\ $S$-modules such that $M\tensor N$ has finite length and $\dim M+\dim N=d$. Then $\chi(M,N)=\ell(M\tensor N)>0$.
\end{corollary}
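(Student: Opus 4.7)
The plan is to combine Auslander--Buchsbaum with the vanishing result \Lem{L:Torvan} to collapse the alternating sum defining $\chi(M,N)$ to its zeroth term, and then invoke Nakayama for positivity.

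First, since $S$ is regular, every finitely generated $S$-module has finite projective dimension. Because $M$ and $N$ are \CM, the Auslander--Buchsbaum formula gives
\begin{equation*}
\op{pd}(M)=d-\op{depth}(M)=d-\dim M,\qquad \op{pd}(N)=d-\dim N.
\end{equation*}
Adding these and using the hypothesis $\dim M+\dim N=d$ yields $\op{pd}(M)+\op{pd}(N)=d$.

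Next, I would apply \Lem{L:Torvan}: any $i>\op{pd}(M)+\op{pd}(N)-d=0$ satisfies $\tor SiMN=0$. Thus in the defining sum \eqref{eq:Serremod} only the term $i=0$ survives, so
\begin{equation*}
\chi(M,N)=\ell(M\tensor N).
\end{equation*}

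Finally, for strict positivity it suffices to note that $M$ and $N$ are nonzero (their \CM\ dimensions are non-negative, and if either were zero then $\dim M+\dim N=d$ would force the other to have full dimension $d$, but a zero module does not have any defined dimension in the usual conventions), and by Nakayama's lemma the tensor product of two nonzero finitely generated modules over a local ring is nonzero; since $M\tensor N$ already has finite length by assumption, we get $\ell(M\tensor N)>0$.

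The only non-routine input is \Lem{L:Torvan}; everything else is bookkeeping, so there is no real obstacle to this argument.
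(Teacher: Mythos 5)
Your argument is correct and follows the paper's own proof: both use Auslander--Buchsbaum on the \CM\ modules together with \Lem{L:Torvan} to kill all higher Tor's, collapsing $\chi(M,N)$ to $\ell(M\tensor N)$. Your extra paragraph on positivity via Nakayama is a detail the paper leaves implicit, but it is the standard justification and changes nothing essential.
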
 
\begin{proof}
Let $p$ and $q$ be the respective dimensions, whence depths of $M$ and $N$, so that $d=p+q$ by assumption. By Auslander-Buchsbaum, $\op{pd}(M)=d-p$ and $\op{pd}(N)=d-q$, and hence $\tor RiMN=0$ by \Lem{L:Torvan}, for all $i>d-p+d-q-d=0$.
\end{proof} 

We can now describe an algorithm for calculating this intersection form, in case we have explicit, small MCM's: let $\pr$ and $\mathfrak q$ be prime ideals in $S$ whose sum is $\mathfrak n$-primary, with respective dimensions $p$ and $q=d-p$.  Let $M$ be a small MCM over $S/\pr$, and similarly $N$ a small MCM over $S/\mathfrak q$. Consider a prime filtration $M_s\sub\dots\sub M_2\sub M$ of $M$, meaning that each subsequent quotient $M_i/M_{i+1}$ is of the form $S/\primary_i$ with each $\primary_i$ some prime ideal in the support of $M$. It is well-known that $\pr$ appears exactly $a:= \ell(M_\pr)$ times among the $\primary_i$. Since   $\chi(\cdot,N)$ is additive, and since it vanishes on modules of dimension less than $d-q=p$, whence in particular on  the $S/\primary_i$ with $\primary_i\neq\pr$, we get $\chi(M,N)=\sum\chi(S/\primary_i,N)=a\chi(S/\pr,N)$. Similarly, with $b:=\ell(N_{\mathfrak q})$, the same argument then yields that
$$
\chi (M,N)=a\chi(S/\pr,N)=ab\chi(S/\pr,S/\mathfrak q)
$$
By \Cor{C:intCM}, however, we know that $\chi(M,N)=\ell(M\tensor N)$, so that we obtained the following formula for the intersection form of two varieties in terms of the naive intersection form of their small MCM's:
\begin{equation}\label{eq:}
\chi(S/\pr,S/\mathfrak q)=  \frac{\ell(M\tensor N)}{\ell(M_\pr)\ell(N_{\mathfrak q})}.
\end{equation} 
In particular, if one of the varieties is already \CM, say $S/\pr$, so that we may take $M=S/\pr$, the formula becomes
\begin{equation}\label{eq:multnaive}
\chi(S/\pr,S/\mathfrak q)=  \frac{\ell(N/\pr N)}{\ell(N_{\mathfrak q})}.
\end{equation} 
Suppose  $N$ is very small, so that $\ell(N_{\mathfrak q})\leq \op{mult}(N)\leq \op{epardeg}(S/\mathfrak q)$ by \cite[Theorem 6.4]{SchOrdLen} and since $N$ is generated by at most $\ell(N/\pr N)$ elements, we get a lower bound
$$
\frac{\mu(N)}{\op{epardeg}(S/\mathfrak q)}\leq \chi(S/\pr,S/\mathfrak q).
$$

\section{The ring of F-integral elements}
Let $R$ be a Noetherian domain of \ch\ $p>0$ and  $K$  its field of fractions.  

\begin{definition}\label{D:pint}
Let $q$ be a power of the \ch\ $p$. We say that an element $f\in K$ is  \emph{$q$-integral} if $f^q\in R$. One easily checks that the sum and product of $q$-integral elements is again $q$-integral, so that they form a subring $\pint qR$ of $K$, called   the \emph{ring of $q$-integral elements}. 
\end{definition} 

Clearly, $\pint qR$ lies inside   the integral closure $\tilde R$ of $R$.  If $R=\pint pR$, then we say that $R$ is \emph{F-normal}. We always have a chain of subrings $R\sub\pint pR\sub\pint {p^2}R\sub \dots\sub\tilde R$, which eventually must stabilize since $\tilde R$ is finite over $R$. This stable value, say $\pint qR$, is therefore  an F-normal ring, called the \emph{F-normalization}  of $R$ and will simply be denoted $\pint{}R$. Elements in $\pint {}R$ will simply be called \emph{F-integral}. It is not hard to see that $R$ is F-normal \iff\ every principal ideal is Frobenius closed (compare this with the fact that $R$ is normal \iff\ every principal ideal is tightly closed; see, for instance,  \cite{HuTC} for details on these closure operations).

We cannot expect $\pint {}R$ to be always a small MCM for $R$: indeed, if $R$ is normal but not \CM, then $R\sub \pint qR\sub \tilde R$ are all equal, but not \CM.\footnote{As explained in the introduction, a more serious obstruction is given by the counterexamples in \cite{BhNonCM}.}  For affine toric domains, however,    normal implies \CM, by a result of Hochster, and one could expect the same to hold for local toric domains. We can now conjecture the following sharper version in positive \ch:

\begin{conjecture}\label{C:FRMCM}
If $R$ is a complete toric domain of positive \ch, then $\pint {}R$ is \CM, whence in particular a small MCM algebra. 
\end{conjecture}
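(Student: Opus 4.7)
My plan is to deduce the conjecture from Hochster's theorem that normal affine semigroup rings are \CM, using a Reynolds-operator / direct-summand argument keyed on lattice combinatorics.

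\medskip

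First, I would reduce to the case of a semigroup ring. By the Eisenbud-Sturmfels correspondence for binomial primes, the defining ideal $I\sub S$ of $R=S/I$ corresponds to a pair $(\Gamma,\rho)$ of a saturated sublattice $\Gamma\sub\zet^n$ and a partial character $\rho$ valued in $S^*$. Working over an uncountable \acf\ residue field (a permitted scalar extension), one absorbs the unit coefficients of $\rho$ via a change of coordinates and a deformation / Artin-approximation argument, reducing to the case $R\iso\pow\kappa H$, the $\maxim$-adic completion of an affine semigroup ring, with $H\sub L\iso\zet^d$ a positive affine semigroup generating $L$. Since Frobenius acts on monomials by $t^v\mapsto t^{pv}$ and $\kappa$ is perfect, this yields
$$
\pint{}R\iso\pow\kappa{H^F},\qquad H^F:=\bigcup_{k\geq 0}\{v\in L:p^k v\in H\},
$$
where $H^F$ lies between $H$ and its full saturation $H^{\op{sat}}$ and is thus a finitely generated affine semigroup.

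\medskip

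The heart of the argument is to exhibit $\pint{}R$ as a direct summand of $\pow\kappa{H^{\op{sat}}}$ as a $\pint{}R$-module. Since the latter is \CM\ by Hochster's theorem and local cohomology respects direct summands, this forces $\pint{}R$ to be \CM\ of the same dimension, hence a small MCM algebra over $R$. To produce the splitting, I argue that the finite abelian group $G:=\zet H^{\op{sat}}/\zet H^F$ has order coprime to $p$; its character group $G^\vee$ then acts on $\pow\kappa{H^{\op{sat}}}$ with fixed subring exactly $\pint{}R$ (using that $\kappa$ contains the required roots of unity), and averaging over $G^\vee$ produces the desired $\pint{}R$-linear retraction $\pow\kappa{H^{\op{sat}}}\onto\pint{}R$.

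\medskip

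The main obstacle is the $p$-torsion-freeness of $G$. Given $v\in\zet H^{\op{sat}}$ with $pv\in\zet H^F$, write $pv=w_1-w_2$ with $w_i\in H^F$ and choose $k$ with $p^k w_i\in H$, obtaining $p^{k+1}v\in\zet H$; combined with $Nv\in\zet H$ for some $N$ witnessing $v\in\zet H^{\op{sat}}$, a B\'ezout combination should give $v\in\zet H\sub\zet H^F$, modulo $p$-divisibility issues in the generators of $H$. The delicate step is that the argument must be carried out at the level of actual $\zet H^F$-membership rather than merely rational combinations, requiring careful lattice-theoretic bookkeeping of how $p$-power multiples distribute across the generators of $H$. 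A secondary hurdle is the reduction in the first step for non-purely-toric ideals with genuine unit coefficients, which may require a more refined use of Artin approximation than is entirely routine.
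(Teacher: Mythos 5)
First, note that the statement you are attempting to prove is presented in the paper only as a conjecture (\Conj{C:FRMCM}); there is no proof to compare against. The paper verifies it solely for the explicit families $\mathcal T_{d,n,m}$ by exhibiting, by hand, free $S$-generators of the saturation module $\sat S{*1}{\frob{q*}R}$ and checking freeness via elementary $q$-adic digit arithmetic (\Lem{L:padicm}); the author expressly says this freeness ``is still an enigma'' for general toric ideals. So your proposal is claiming to settle an open problem, and it must be read with that in mind.

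The central splitting step does not work, and the failure is structural. Under your own normalization $L:=\zet H$, we have $H\sub H^F\sub H^{\op{sat}}\sub L$, so $\zet H^F=\zet H^{\op{sat}}=L$ and your group $G:=\zet H^{\op{sat}}/\zet H^F$ is \emph{trivial}; the fixed ring of a trivial character group is all of $\pow\kappa{H^{\op{sat}}}$, not $\pint{}R$. The deeper obstruction is that $H^F$ is in general not of the form $H^{\op{sat}}\cap\Lambda$ for \emph{any} sublattice $\Lambda\sub L$, because the prime-to-$p$ index of $\zet(H\cap F)$ inside $L\cap\zet F$ varies from face $F$ to face $F$ of the cone, and there is no single lattice that reproduces all of them. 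Concretely, take $L=\zet^2$, $H=\langle(2,0),(1,1),(0,3)\rangle$, $p=5$. Then $H^{\op{sat}}=\nat^2$, and one checks $(1,0),(0,1),(0,2)\notin H^F$ while $(1,1),(2,1)\in H^F$. Any sublattice $\Lambda$ containing $(1,1)$ and $(2,1)$ must contain $(1,0)=(2,1)-(1,1)$, so $H^{\op{sat}}\cap\Lambda\ni(1,0)$, whereas $H^F\not\ni(1,0)$. Hence no group of monomial characters on $\pow\kappa{H^{\op{sat}}}$ can have $\pow\kappa{H^F}\iso\pint{}R$ as its invariant ring, and the Reynolds retraction you need cannot be produced this way. (For the same reason the naive monomial projection is not even $\pint{}R$-linear: $(1,1)\in H^F$, $(1,0)\notin H^F$, yet $(1,1)+(1,0)=(2,1)\in H^F$, so the kernel you would want is not a submodule.) A secondary, but also real, concern is the reduction to constant-coefficient semigroup rings: the paper emphasizes that allowing unit \emph{power-series} coefficients is precisely what makes local toric rings a genuinely larger class, and trivializing the $S^*$-valued character requires solving a fixed-point problem $\tuple v^{\gamma}=\phi_{\tuple v}(\chi(\gamma))$ whose solvability in \ch\ $p$ needs an actual argument (saturatedness of the lattice makes the linearization invertible mod $p$, so it is plausible, but it is not a routine Artin-approximation cite).
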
 

We now give an alternative description of $\pint {}R$, which will allow us to verify the conjecture in certain cases. Moreover, the construction also works for non-domains. In fact, we are not interested in the ring structure on $\pint{}R$, but instead will realize it as a certain submodule of $\frob {*}R$. Recall that $\frob q\colon R\to R$ denotes the Frobenius morphism $x\mapsto x^q$, where $q$ is some power of the \ch\ $p$. Viewing $R$ as an $R$-module via this morphism, we get an $R$-module which we denote by $\frob{q*}R$, or just $\frob*R$, if $q$ is clear from the context; we call it the \emph\fr\ of $R$ (with respect to $q$). We will denote elements in $\frob{q*}$ by $*_qa$, or just $*a$ if $q$ is clear from the context. The $R$-module action on $\frob{q*}R$ is now conveniently given by $r{*}a:=*r^qa$. We make $\frob*R$ into an $R$-algebra, by giving it a   ring structure via $(*b)\cdot(*c):=*bc$. Hence as   abstract rings, $R$ and $\frob*R$ are isomorphic, but not as $R$-algebras. In fact, by Kunz's theorem, $\frob*R$ is flat as an $R$-module, of rank $q^d$, \iff\ $R$ is a $d$-dimensional regular  ring. 

Suppose $R$ is a domain with field of fractions $K$. We may likewise  view $\frob{q*}K$ as a $K$-algebra, whence by restriction as an $\pint qR$-algebra. One easily checks that    $\frob{q*}R$ is invariant under $\pint qR$,  so that $\frob{q*}R$ has the structure of an  $\pint qR$-algebra. In fact, we will shortly realize $\pint qR$ as an $R$-submodule of $\frob {q*}R$.

Let $R$ be a Noetherian ring, $\Sigma\sub R$ a multiplicative set in $R$,  let $M$ be an $R$-module and $N\sub M$ a submodule. We say that $N$ is \emph{$\Sigma$-saturated} in $M$, if  $sm\in N$ for some $m\in M$ and $s\in \Sigma$ implies that already $m\in N$.  This is equivalent with the canonical map $M\to \inv\Sigma(M/N)$ being injective. We define the \emph{$\Sigma$-saturation} $\sat \Sigma NM$ of $N$ in $M$ to be the submodule of all $m\in M$ such that $sm\in N$ for some $s\in \Sigma$. It is not hard to see that $\sat\Sigma NM$ is the kernel of $M\to \inv\Sigma(M/N)$. In particular, it is again an $R$-module. In case $N$ is cyclic, generated by a single element $n$, we may write $\sat\Sigma nM$ instead of $\sat\Sigma NM$.
%
%
%

We will use this construction in the following situation. Let $R$ be a complete Noetherian local ring of \ch\ $p>0$ and let $S\sub R$ be some Noether normalization of $R$, that is to say, a regular local subring $S\sub R$, over which $R$ is finite (as an $S$-module). Recall that by Cohen's structure theorem, Noether normalizations which have moreover the same residue field, are completely determined by a choice of system of parameters in $R$,  by having them act as the variables in a power series over the residue field. Note that $S\setminus\{0\}$ is a multiplicative set in $R$, which, for simplicity, we just denote by $S$.

\begin{proposition}\label{P:QSR}
If $R$ is a complete local domain of \ch\ $p>0$, if $q$ is any power of $p$, and if $S\sub R$ is some Noether normalization of $R$, then $\sat S{*1}{\frob{q*}R}$ is the $\pint qR$-submodule of $\frob{q*}R$ generated by $*1$, and hence $\sat S{*1}{\frob{q*}R}\iso \pint qR$, so that in  particular it is independent from the Noether normalization $S$.
\end{proposition}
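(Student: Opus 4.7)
The plan is to unpack the saturation condition directly and match it with the $\pint qR$-module structure on $\frob{q*}R$. Recall that an element $*a$ lies in $\sat S{*1}{\frob{q*}R}$ \iff\ there exist $s\in S\setminus\{0\}$ and $r\in R$ with $s\cdot{*a}=r\cdot{*1}$; translating through the Frobenius action on $\frob{q*}R$, this becomes $*s^qa=*r^q$, i.e.\ $s^qa=r^q$ in $R$. Equivalently, setting $f:=r/s\in K$, the element $a\in R$ satisfies $a=f^q$ for some $f\in K$.

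Next I would use the Noether normalization $S\sub R$ to upgrade this to a statement purely about $\pint qR$. Since $R$ is a domain, finite over the regular (hence normal) local ring $S$, every element of $K=\op{Frac}(R)$ can be written as $r/s$ with $r\in R$ and $s\in S\setminus\{0\}$: given $b\in R$ nonzero, its minimal polynomial over $\op{Frac}(S)$ has coefficients in $S$ (by normality) and nonzero constant term $c_0\in S$, so $1/b\in \tfrac1{c_0}R$. Thus the condition `$a=f^q$ for some $f\in K$ expressible as $r/s$ with $s\in S$' is equivalent to `$a=f^q$ for some $f\in K$'. Since $f^q=a\in R$, this $f$ automatically lies in $\pint qR$, and conversely every $f\in \pint qR\sub K$ can be written with $S$-denominator. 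Hence $\sat S{*1}{\frob{q*}R}$ equals the set $\{*f^q\mid f\in\pint qR\}$.

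Under the $\pint qR$-action on $\frob{q*}R$ (via $f\cdot{*a}=*f^qa$, which is well-defined as $f^q\in R$), the cyclic $\pint qR$-submodule generated by $*1$ is precisely $\pint qR\cdot{*1}=\{*f^q\mid f\in\pint qR\}$, which by the preceding paragraph agrees with the saturation. The induced $R$-linear map $\pint qR\to \sat S{*1}{\frob{q*}R}$, $f\mapsto f\cdot{*1}=*f^q$, is thereby surjective; it is injective since $\pint qR\sub K$ is a domain, so $f^q=0$ forces $f=0$. This yields the claimed isomorphism $\sat S{*1}{\frob{q*}R}\iso \pint qR$, and since the right-hand side is intrinsic to $R$, independence from the choice of Noether normalization $S$ is automatic.

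The entire argument is a careful bookkeeping exercise; the one nontrivial input is the fact that every element of $K$ admits a denominator in $S$, which follows from module-finiteness of $R$ over $S$ combined with normality of $S$. There is no real obstacle beyond keeping straight the twist between $*a$ and $a$ under the Frobenius action.
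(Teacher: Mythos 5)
Your proof is correct and takes essentially the same route as the paper's: unwind the definition of $S$-saturation through the Frobenius twist to get $s^q a = r^q$, hence $a = (r/s)^q$ with $r/s \in \pint qR$, and match this against the cyclic $\pint qR$-submodule $\{*f^q : f \in \pint qR\}$. The only cosmetic difference is in clearing denominators from $R$ to $S$: you invoke normality of $S$ to put the minimal polynomial in $S[T]$, whereas the paper just multiplies a nonzero $t\in R$ by the other factor of an integral equation over $S$ to land in $S\setminus\{0\}$—either works, and the latter does not even need normality.
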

\begin{proof}
An element in the $\pint qR$-submodule of $\frob*R$  generated by $*1$ is of the form $f{*}1$ for some $f\in \pint q R$. The latter means that $f^q=a\in R$, and so $f{*1}=*a$. Write $f=r/t$ with $r,t\in R$ and $t\neq 0$. Since $R$ is finite over $S$, there exists $t'\in R$ such that $s:=tt'$ is a non-zero element of $S$, and hence $sf=rt'$. Therefore, $s{*a}=sf{*1}=rt'{*}1$, showing that $*a\in \sat S{*1}{\frob{*q}R}$. The converse follows along the same lines: take $*a\in \sat S{*1}{\frob{q*}R}$, so that $s{*}a=r{*1}$ for some non-zero $s\in S$ and $a, r\in R$. This means that $*s^qa=*r^q$, whence $s^qa=r^q$, so that $a=(r/s)^q$, and hence $f:=r/s\in \pint qR$. It follows that $*a=*f^q=f{*}1$, as we needed to show. 
\end{proof}

\begin{remark}\label{R:QSR}
Even if $R$ is not a domain, it is easy to see that $\sat S{*1}{\frob{q*}R}$ is closed under multiplication, and hence is in fact an $R$-subalgebra of $\frob*R$. 
\end{remark} 

Suppose again that $R$ is a domain. Let us call, in general, regardless of \ch, an element $f\in K$ \emph{power-integral}, if $f^m\in K$, for all $m\gg0$. Clearly, if $R$ has positive \ch, then a power-integral element is F-integral. Note, however, that   power-integral elements are not closed under addition and so in general do not form a ring; instead, we have to take the $R$-algebra they generate, denoted $\powint R$. Clearly, $\powint R$ lies in the integral closure $\tilde R$ of $R$, and if $R$ has positive \ch, then 
\begin{equation}\label{eq:powintFint}
\powint R\sub\pint{}R. 
\end{equation} 
We expect that in the toric case, \eqref{eq:powintFint} is an equality (see \Examp{E:e3}). For instance, in the affine case,  we know that $\tilde R$ is again toric. Suppose $p>[\tilde R: R]$, and let  $f\in K$ be F-integral. Let $\Gamma(f)\sub \nat$ be the semi-group of exponents $m$ such that $f^m\in R$. Hence for some $n\gg0$, we have $p^n\in\Gamma(f)$.  Since $f\in\tilde R$, 
it satisfies a binomial integral equation   of the form $T^m-u\tuple y^\alpha$ with $u$ a  unit, we also get $m\in\Gamma(f)$. By assumption $m$ and $p^n$ are co-prime, so that $\Gamma(f)$ is co-finite, which means  that $f$ is power-integral. Note, however, that it is not even clear whether $\powint R=\powint{(\powint R)}$ (it will be, if  \eqref{eq:powintFint} is  an equality).

\section{Hochster rings in positive \ch}
In this section, we will provide a strategy for constructing small MCM's in \ch\ $p$. We     only will illustrate the method for the   families $\mathcal T_{d,n,e}(\kappa)$ from the introduction, but presumably, many other cases can be treated this way (we work out one such more general case in \Examp{E:genfam}). To start, we need a number-theoretic lemma:

\begin{lemma}\label{L:padicm}
Given $q,m,b$ with $q\equiv1\mod m$ and $b<q$,  let $b(\tfrac{q-1}m)=b^{(1)}q+b^{(0)}$ be written in $q$-adic expansion, so that $0\leq b^{(i)}<q$. Then $b^{(1)}+b^{(0)}<q$, and in particular, $b^{(1)}q^2+(b^{(1)}+b^{(0)})q+b^{(0)}$ is the $q$-adic expansion of $b(\tfrac{q^2-1}m)$.

In fact,  if we let $\epsilon$ be the `adjusted' remainder of $b$ modulo $m$, that is to say, $1\leq\epsilon\leq m$ and $b\equiv\epsilon\mod m$, then $\tr b0=\tfrac {q\epsilon-b}m$.
\end{lemma}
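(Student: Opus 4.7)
The plan is to prove the lemma essentially by direct computation, exploiting the fact that the $q$-adic digits can be read off from the decomposition $b=\ell m+\epsilon$ with $\epsilon\in\{1,\dots,m\}$. First I would write $b=\ell m+\epsilon$ where $\ell=(b-\epsilon)/m\ge 0$, and set $c:=(q-1)/m$, which is an integer by the hypothesis $q\equiv 1\pmod m$. Then
\[
b\cdot\frac{q-1}{m}=(\ell m+\epsilon)c=\ell(q-1)+\epsilon c=\ell q+\Bigl(\epsilon c-\ell\Bigr),
\]
and a direct simplification gives $\epsilon c-\ell=(q\epsilon-b)/m$.

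Next I would verify that this actually is the $q$-adic expansion by showing $0\le (q\epsilon-b)/m<q$. Positivity follows from $\epsilon\ge 1$ and $b<q$. For the upper bound, I would split into cases: if $\epsilon<m$, then $(q\epsilon-b)/m<q\epsilon/m<q$; if $\epsilon=m$, then $m$ divides $b$ and $b\ge m$, so $(qm-b)/m=q-b/m\le q-1<q$. This identifies $\tr b0=(q\epsilon-b)/m$ and $\tr b1=\ell$, giving the ``in fact'' clause of the statement.

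With these explicit values in hand, the bound $\tr b1+\tr b0<q$ follows from a one-line algebra:
\[
\tr b1+\tr b0=\ell+\frac{q\epsilon-b}{m}=\frac{\ell m+q\epsilon-b}{m}=\frac{\epsilon(q-1)}{m}=\epsilon c\le mc=q-1<q,
\]
using $\epsilon\le m$. Finally, for the $q^2$-adic expansion of $b(q^2-1)/m$, I would simply multiply the already-known expression by $q+1$:
\[
b\cdot\frac{q^2-1}{m}=(\tr b1 q+\tr b0)(q+1)=\tr b1 q^2+(\tr b1+\tr b0)q+\tr b0,
\]
and observe that each of the three ``digits'' $\tr b1$, $\tr b1+\tr b0$, $\tr b0$ lies in $[0,q)$ by what was just shown, hence this is the $q$-adic expansion.

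I do not really expect a hard step here — the lemma is purely arithmetic, and the only mildly delicate point is the case analysis $\epsilon<m$ versus $\epsilon=m$ when showing $\tr b0<q$. Everything else is just tracking the identity $b=\ell m+\epsilon$ through the division.
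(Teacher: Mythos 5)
Your proof is correct and follows essentially the same route as the paper: both decompose $b=\ell m+\epsilon$ (the paper writes $t$ for your $\ell$ and $s$ for your $c$), compute $b(q-1)/m=\ell q+(\epsilon c-\ell)$, identify the digits, note the sum equals $\epsilon c\le q-1$, and multiply by $q+1$ for the $q^2$ statement. The only cosmetic difference is that you verify $\tr b0<q$ by a case split on $\epsilon<m$ vs.\ $\epsilon=m$, whereas the paper gets it in one line from $s\epsilon-t\le s\epsilon\le sm=q-1$.
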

\begin{proof}
Write $q=sm+1$ and $b=tm+\epsilon$ with $1\leq \epsilon\leq m$. Since $b<q$, we get $t\leq s$. Hence $b^{(1)}q+b^{(0)}=b\tfrac{q-1}m=(tm+\epsilon)s=tsm+s\epsilon  =t(q-1)+r=tq+s\epsilon -t$. Since  $\epsilon\leq m$, we get $s\epsilon-t\leq s\epsilon\leq sm<q$.   Since $t\leq s$, we get $s\epsilon-t\geq 0$ and comparing $q$-adic expansions, we see that $ b^{(1)}=t$ and $b^{(0)}=s\epsilon-t$, and so their sum is equal to $s\epsilon<q$. 
The second assertion follows since $b\tfrac{q^2-1}m=(q+1)b\tfrac{q-1}m$.
\end{proof} 
\begin{remark}\label{R:padicm}
Given $q>1$, define the \emph{$q$-adic trace} $\trace qa$ of a number $a\in\nat$ as the sum of its $q$-adic digits. The above proof shows that if $q\equiv 1\mod m$ and $b<q$ has adjusted remainder $\epsilon$ modulo $m$, then
$$
\trace q{b\tfrac{q-1}m}= \epsilon\tfrac{q-1}m
$$
 Moreover, 
$$
\trace q{b\tfrac{q^2-1}m}=2\trace q{b\tfrac{q-1}m}.
$$
\end{remark}

\begin{theorem}[The case $m=2$]\label{T:quadHochstergen}
Let $S=\pow \kappa{\tuple y}$ be a power series in $d$ variables ${\tuple y}$ where $\kappa$ is    field     of \ch\ $p\neq 2$.  Let $\tuple u$ be an $n$-tuple of variables, let $a_i,b_{ij}\in S$ be units and assume $a_ia_j=b_{ij}^2$ for $i<j$. Let $\alpha_i$ be $d$-tuples of (positive) exponents, such that $\alpha_i+\alpha_j=2\beta_{ij}$, for   some $\beta_{ij}$ and all $i<j$. Let $I\sub\pow S{\tuple u}$ be the ideal generated by all $u_i^2-a_i\tuple y^{\alpha_i}$ and $u_iu_j-b_{ij}\tuple y^{\beta_{ij}}$ for $1\leq i<j\leq n$. Then  $R:=\pow S{\tuple u}/I$ admits a small MCM.
\end{theorem}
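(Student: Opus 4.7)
\emph{Plan.} Following the strategy sketched in the introduction, I would realize the MCM inside a Frobenius transform. Fix an iterate $q = p^N$ of the Frobenius; then $q$ is odd, so $q\equiv 1\pmod{m}$ with $m=2$ (automatic because $p\neq 2$). Form $\frob{q*}R$ and set
$$Q \;:=\; \sat S{*1}{\frob{q*}R},$$
the $S$-saturation of the cyclic $R$-submodule $R \cdot *1$; by \Rem{R:QSR}, $Q$ is an $R$-subalgebra of $\frob{q*}R$. The whole theorem then reduces to showing that $Q$ is free of rank~$2$ over the regular local ring~$S$: free-ness yields $\op{depth}_S Q=\dim S=\dim R$ by Auslander--Buchsbaum, and this transfers to $\op{depth}_R Q=\dim R$ because $S\sub R$ is finite, making $Q$ a small MCM of multiplicity~$2$ (and hence, in fact, \vs).

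\emph{Explicit $S$-basis.} The first basis element is $\xi_0 := *1$. For the second, the guiding identity, valid precisely because $q$ is odd, is
$$u_i^q \;=\; u_i\,(u_i^2)^{(q-1)/2} \;=\; a_i^{(q-1)/2}\,\tuple y^{(q-1)\alpha_i/2}\,u_i,$$
together with its off-diagonal counterparts obtained by taking suitable powers of $u_iu_j=b_{ij}\tuple y^{\beta_{ij}}$. The hypotheses $a_ia_j=b_{ij}^2$ and $\alpha_i+\alpha_j=2\beta_{ij}$ are exactly the square-root compatibility conditions ensuring all these identities fit into one coherent picture; they say that all the various ``half-powers'' of the $u_i$ live in a single extension. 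The arithmetic engine that turns the above identity into a genuine element of $Q$ is \Lem{L:padicm}: it controls the $q$-adic digit sum of $b(q-1)/m$ and so lets one split the exponent $(q-1)\alpha_i/2$ into a $q$-th power part plus a small residue, producing an element $\xi_1 := *(\tuple y^\gamma u_1)$ (for an explicit $\gamma$) whose membership in $Q$ is witnessed by an equation $s^q\cdot\tuple y^\gamma u_1 = r^q$ for explicit $s\in S$ and $r\in R$.

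\emph{Freeness and conclusion.} The $S$-linear independence of $\xi_0,\xi_1$ is straightforward: any relation $s_0\xi_0+s_1\xi_1=0$ unwinds, via the scalar action $s\cdot *r=*s^qr$, to $s_0^q + s_1^q\tuple y^\gamma u_1 = 0$ in $R$, and since the toric ideal has height~$n$ the element $u_1$ has nonzero image in some $d$-dimensional minimal prime quotient of $R$, modulo which $1$ and $\tuple y^\gamma u_1$ are $S$-linearly independent, forcing both $s_i^q$ and hence both $s_i$ to vanish. The spanning direction---that every $*r\in Q$ is an $S$-combination of $\xi_0,\xi_1$---is the principal obstacle, and is precisely the ``enigmatic'' freeness the author flags in the introduction. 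For this step one would start from the defining relation $s^qr = a^q$, expand both $r$ and $a$ in a fixed $S$-generating set of $R$, and invoke the $q$-adic digit bookkeeping of \Lem{L:padicm}---together with the rigidity forced by $a_ia_j=b_{ij}^2$ and $\alpha_i+\alpha_j=2\beta_{ij}$---to constrain each coefficient of $r$ so tightly that $r$ must lie in $S\cdot 1 + S\cdot\tuple y^\gamma u_1$. Once rank-$2$ freeness of $Q$ over $S$ is established, the small-MCM conclusion follows as in the first paragraph.
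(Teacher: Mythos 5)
Your overall framework matches the paper exactly: set $Q := \sat S{*1}{\frob{q*}R}$ and show $Q$ is free of rank two over $S$, then use Auslander--Buchsbaum and finiteness of $S\sub R$ to conclude that $Q$ is a (very small) MCM. You also correctly spot the identity $u_i^q = a_i^{(q-1)/2}\tuple y^{(q-1)\alpha_i/2}u_i$ and the role of \Lem{L:padicm} in splitting the exponent $(q-1)\alpha_i/2$ into a $q$-th multiple plus a small residue $\tr{\alpha_i}0$.

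But your proposal stops exactly where the paper's proof begins, and the root cause is a misreading of the introduction: the remark that freeness ``is still an enigma'' refers to \emph{more complicated} toric ideals, for which no explicit generators are known, not to the families $\mathcal T_{d,n,m}$, for which the paper gives a complete argument. What you are missing is the following. First, a preliminary reduction: replace $R$ by $R\tensor_S\frob*S$, which is finite over $R$ and again lies in $\mathcal T_{d,n,2}(\kappa)$; after this the $a_i$ (and, via $a_ia_j=b_{ij}^2$, the $b_{ij}$) have $p$-th roots in $S$, which is what lets one pull the unit $a_i^{(q-1)/2}$ across the $*$ as the $S$-scalar $a_i^{(q-1)/(2q)}$. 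Second, the saturation does not hand you just two candidate generators and an unmanageable spanning problem: since $R$ is generated over $S$ by $1,u_1,\dots,u_n$, the cyclic module $R\cdot{*}1$ is generated over $S$ by ${*}1$ and the $u_i\cdot{*}1=(\text{unit})\,\tuple y^{\tr{\alpha_i}1}\!\cdot{*}u_i\tuple y^{\tr{\alpha_i}0}$, so that dividing out the $\tuple y^{\tr{\alpha_i}1}$-factors (this is what saturating does) yields $e_0:={*}1$ and $e_i:={*}u_i\tuple y^{\tr{\alpha_i}0}$, and these $n+1$ elements generate $Q$. You only produced two. Third---and this is the engine---the hypotheses $a_ia_j=b_{ij}^2$ and $\alpha_i+\alpha_j=2\beta_{ij}$ are not vague ``compatibility conditions''; they produce the $R$-syzygies $a_i\tuple y^{\alpha_i}u_j=b_{ij}\tuple y^{\beta_{ij}}u_i$ of~\eqref{eq:uisyz}. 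Multiplying that syzygy by $\tuple y^{(p\epsilon-\alpha_j)/2-\alpha_i}$ (nonnegative since $q$ was chosen large; $\epsilon$ is the common adjusted remainder of the $\alpha_i$ modulo~$2$, forced by $\alpha_i+\alpha_j=2\beta_{ij}$, and $(p\epsilon-\alpha_i)/2 = \tr{\alpha_i}0$ by \Lem{L:padicm}) yields $b_{ij}e_i=a_ie_j$ as in~\eqref{eq:eiej}. Since $a_i,b_{ij}$ are units, $e_1,\dots,e_n$ all coincide up to a unit, collapsing the $n+1$ generators to the $S$-linearly independent pair $e_0,e_1$, so $Q\iso S^2$. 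Without the observation that the saturation comes equipped with $n+1$ generators and that the numerical hypotheses are engineered to make the last $n$ of them proportional, the argument cannot close; with it, no enigma remains for this family.
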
 
\begin{proof}
Note that $R$ is a member of the family $\mathcal T_{d,n,2}(\kappa)$ from the introduction.
As an $S$-module, $R$ is minimally generated by $1$ and all $u_i$, but not freely, since we have the following syzygy relations
\begin{equation}\label{eq:uisyz}
a_i\tuple y^{\alpha_i}u_j=b_{ij}\tuple y^{\beta_{ij}}u_i
\end{equation} 
for all $i<j$. 
In particular, $R$ is not free as an $S$-module, whence is not \CM. Moreover, the $S$-submodule $N$ generated by all $u_i$   is a direct summand, and $R= S\oplus N$ as $S$-modules (where $S\sub R$ in the natural way). 

Let $q$ be a power of $p$ which is bigger than any entry in $2\beta_{ij}$. We will use the Frobenius with respect to $q$, but to not overload notation, we simply will assume that $p$ itself is sufficiently big and leave the details for small primes to the reader.  Since the base change $R':=R\tensor_S\frob*S$ is finite over $R$, it suffices to show that $R'$ admits a small MCM (note that as a ring $\frob*S\iso S$, so that $R'$ is also a member of $\mathcal T_{d,n,2}(\kappa)$). Since   the $a_i$ are $p$-th powers in $R'$, we may assume from the start that each $a_i$ has a $p$-th root in $S$. 
Our goal is to show that  $Q:=\sat S{*1}{\frob*R}$ is a (small) MCM, and to this end, we only need to show that $Q\iso S^2$ for then its depth as an $S$-module, whence as an $R$-module, would be $d$.   Note that $\frob*R$ is (non-freely) generated as an $S$-module by $\{*\tuple y^\delta,*u_i\tuple y^\delta\}$, where $\delta$ runs over all exponent vectors in $\{0,\dots,p-1\}^d$ and where $1\leq i\leq n$. As we will see (see, for instance, \eqref{eq:eiej} below), this set is not even minimally generating.

To verify the claim, for any $\delta\in\{0,\dots,p-1\}^d$, let $\tr\delta1p+\tr\delta0$ be the $p$-adic expansion of $\delta\cdot(\tfrac{p-1}2)$.  Hence
$$
u_i{*}1=*u_i^p=*u_ia_i^{\tfrac{p-1}2}\tuple y^{\tr{\alpha_i}1p+\tr{\alpha_i}0}=a_i^{\tfrac{p-1}{2p}}\tuple y^{\tr{\alpha_i}1}{*}u_i\tuple y^{\tr{\alpha_i}0}
$$
Put $e_0:=*1$ and $e_i:={*}u_i\tuple y^{\tr{\alpha_i}0}$, so that, by definition of $S$-saturation, the $e_i$, for $i=\range 0n$, generate $Q$.
We will show that up to a unit, the $e_i$, for $i=\range 1n$ are all the same elements, showing that $Q$ is in  fact generated as an $S$-module by two elements, $e_0$ and $e_1$, which are easily seen to be linearly independent over $S$, as we wanted to show. To verify the latter claim, notice that our assumptions imply that all $\alpha_i$ have the same adjusted remainder $\epsilon$ modulo two (recall that this means that $\epsilon$ has entries either one or two, depending on the parity of the corresponding entry in the tuples $\alpha_i$). 
By \Lem{L:padicm}, we have
$$
\tr{\alpha_i}0=\frac{p\epsilon-\alpha_i}2.
$$
Since we chose $p$ large enough, the quantity 
$$
\frac{p\epsilon-\alpha_i}2-\beta_{ij}=\frac{p\epsilon-\alpha_j}2-\alpha_i
$$
 is positive, so that from \eqref{eq:uisyz}, we get
\begin{equation}\label{eq:eiej}
b_{ij}e_i=b_{ij}u_i\tuple y^{\beta_{ij}}\tuple y ^{\tfrac{p\epsilon-\alpha_j}2-\alpha_i}=a_iu_j\tuple y ^{\tfrac{p\epsilon-\alpha_j}2}=a_ie_j
\end{equation} 
proving the claim.
\end{proof} 

\begin{theorem}[The case $m$ odd]\label{T:binommHoch}
Let $S=\pow \kappa{\tuple y}$ be a power series in $d$ variables ${\tuple y}$ over a field $\kappa$ of   \ch\ $p$ not dividing  $m\in\nat$. Let $a, b,c\in S$ be units and assume $ab=c^m$. Let $\alpha, \beta$ be $d$-tuples of (positive) exponents, such that $\alpha+\beta=m\gamma$, for   some $\gamma$. Let $I\sub\pow S{u,v}$ be the ideal generated by   $u^m-a\tuple y^{\alpha}$, $v^m-b\tuple y^{\beta}$ and $uv-c\tuple y^{\gamma}$. Then  $R:=\pow S{u,v}/I$  admits a small MCM.
\end{theorem}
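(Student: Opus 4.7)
The plan is to adapt the proof of \Thm{T:quadHochstergen}, replacing arithmetic mod $2$ by arithmetic mod $m$. Since $p \nmid m$, pick a power $q$ of $p$ with $q \equiv 1 \pmod m$, taken large enough that $(m-1)\alpha(j)$ and $(m-1)\beta(j)$ are strictly less than $q$ for every coordinate $j$; this ensures \Lem{L:padicm} applies to all relevant arguments. Set $k := (q-1)/m$, so $q = mk + 1$. After the finite base change $R \otimes_S \frob{q*}S$ (which, as in the even case, reduces the problem to an isomorphic member of the family), I may assume $a = a_0^q$, $b = b_0^q$, $c = c_0^q$ are $q$-th powers in $S$, and work throughout inside $\frob{q*}R$.

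As an $S$-module, $R$ is generated by $\{1, u, \ldots, u^{m-1}, v, \ldots, v^{m-1}\}$, and evaluating $u^i v^m$ in two ways yields the basic syzygies
\[
b \tuple{y}^\beta u^i = c^i \tuple{y}^{i\gamma} v^{m-i}, \quad i = 1, \ldots, m-1.
\]
Define candidate generators for an MCM:
\[
e_i := *(u^i \tuple{y}^{\tr{(\alpha i k)}{0}}), \quad f_j := *(v^j \tuple{y}^{\tr{(\beta j k)}{0}}),
\]
for $i = 0, \ldots, m-1$ and $j = 1, \ldots, m-1$, where $\tr{\cdot}{0}$ denotes the entrywise $0$-th $q$-adic digit. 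The same calculation as in the even case---namely $u^i \cdot *1 = *u^{iq} = *(u^i a^{ik} \tuple{y}^{\alpha i k}) = a_0^{ik}\tuple{y}^{\tr{(\alpha i k)}{1}} \cdot e_i$, and analogously for $v^j \cdot *1$---shows that all $e_i$ and $f_j$ belong to $Q := \sat{S}{*1}{\frob{q*}R}$.

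The heart of the proof is an arithmetic identity reducing the number of independent generators from $2m - 1$ to $m$. Using $\alpha + \beta = m\gamma$ together with the fact that $\alpha(j) i$ and $\beta(j)(m-i)$ share the same adjusted remainder modulo $m$ (their sum being divisible by $m$), the formula $\tr{(bk)}{0} = (q\epsilon - b)/m$ from \Lem{L:padicm} yields entrywise
\[
i \gamma + \tr{(\alpha i k)}{0} - \beta = \tr{(\beta (m-i) k)}{0}.
\]
Multiplying the syzygy above by $\tuple{y}^{\tr{(\alpha i k)}{0} - \beta}$ (which has nonnegative entries provided $q$ is large), applying $*$, and pulling out $q$-th powers via the $R$-action on $\frob{q*}R$, produces the pivotal relation
\[
b_0 \cdot e_i = c_0^i \cdot f_{m-i}.
\]
Hence every $f_j$ is a unit multiple of $e_{m-j}$, and the $S$-submodule $Q_0 \subseteq \frob{q*}R$ spanned by $e_0, \ldots, e_{m-1}$ contains all $f_j$; expanding $r^q$ via Frobenius then shows $R \cdot *1 \subseteq Q_0$. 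A direct computation of $u \cdot e_i$ and $v \cdot e_i$ (using $uv = c\tuple{y}^\gamma$, and the $e$--$f$ relation when crossing between the $u$-tower and the $v$-tower) further shows $Q_0$ is stable under the $R$-action, so $Q_0$ is an $R$-submodule of $\frob{q*}R$.

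Finally, for linear independence of $e_0, \ldots, e_{m-1}$ over $S$: a relation $\sum s_i e_i = 0$ translates to $\sum s_i^q u^i \tuple{y}^{\tr{(\alpha i k)}{0}} = 0$ in $R$, and since $1, u, \ldots, u^{m-1}$ are $S$-linearly independent in $R$ (after inverting $\tuple y$, $R$ localizes to the degree-$m$ extension defined by $u^m = a \tuple{y}^\alpha$), each $s_i$ vanishes. Thus $Q_0$ is $S$-free of rank $m$, whence a small MCM over $R$ of multiplicity $m$; in fact it is very small, since $m \leq 2m - 1 = \ell(R/\tuple{y} R)$ bounds the parameter degree of $R$. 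The main obstacle is the arithmetic identity in the third paragraph: verifying it coordinate by coordinate requires a delicate application of \Lem{L:padicm}, valid only in the regime $\alpha(j) i, \beta(j)(m-i) < q$---hence the size hypothesis on $q$. The technical handling of small primes, where the relevant $q$-th root extractions in the base change must be done carefully, is routine and parallels the even case.
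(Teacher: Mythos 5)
Your proposal follows essentially the same strategy as the paper's proof: choose $q\equiv 1\pmod m$ large, use \Lem{L:padicm} to compute the $q$-adic digits $\tr{(\alpha i k)}{0}$ and the identity $\tilde a_i=\tilde b_{m-i}$ coming from $\alpha+\beta=m\gamma$, derive the relation collapsing the $v$-tower generators onto the $u$-tower generators, and conclude $S$-freeness of rank $m$. The only departures are cosmetic: you normalize $a,b,c$ to $q$-th powers via the base change $R\otimes_S\frob{q*}S$ (which the paper uses for $m=2$), whereas the paper normalizes $a=b=1$ by Hensel's Lemma and a change of variables; you phrase the syzygy as $b\tuple y^\beta u^i=c^i\tuple y^{i\gamma}v^{m-i}$ instead of the paper's $u^i\tuple y^{(m-i)\gamma}=v^{m-i}\tuple y^{\alpha}$, but these are equivalent; and you verify $R$-stability of the explicit $S$-span $Q_0$ directly rather than relying on the paper's (rather terse) claim that the standard generators $e_{ij}$ exhaust the saturation $Q$---arguably a small gain in rigor, though it is the same underlying idea.
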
 
\begin{proof}
After taking some finite  extension of $\kappa$, we may assume that the residues of $a,b$ in $\kappa$ are $m$-th powers, and hence, by Hensel's Lemma, there exist units $a_0,b_0\in S$ such that 
$a=a_0^m$ and $b=b_0^m$. After the change of variables $u\mapsto u/a_0$ and $v\mapsto v/b_0$, we may therefore assume that they are equal to one. Let $N$ be the maximum of the entries in $\alpha$ and $\beta$ and choose    a power $q$ of $p$ such that $q\equiv 1\mod m$ and $q>mN$. We will use the Frobenius with respect to $q$. 
As in the previous proof,  we show that $Q:=\sat S{*1}{\frob*R}$ is a (small) MCM over $R$, by proving that as an $S$-module,  $Q\iso S^m$. 
As generators of $\frob*R$  we take again the standard generating set  $\mathbf E:=\{u^i\tuple y^\delta,v^i\tuple y^\delta\}$ with $0\leq i<m$ and  $\delta\in\{0,\dots,q-1\}^d$.

 Put $e_{0,0}:=*1$. Since the defining equations are binomial, there exists, for each pair $(i,j)\in\nat^2$ a generator $e_{ij}\in \mathbf E$ and some  $s_{ij}\in S$ such that $u^iv^je_{0,0}=s_{ij}e_{ij}$, and these $e_{ij}$ then generate $Q$ over $S$. We will show that already $e_{0,0},e_{1,0},\dots,e_{m-1,0}$ generate $Q$ as an $S$-module. Since $1,u,\dots,u^{m-1}$ are linearly dependent over $S$, this shows the claim that $Q\iso S^m$. From the defining equations, it follows that $e_{m,0}=e_{0,m}=e_{1,1}$, etc.  Hence the $e_{i,0}$ and $e_{0,i}$ for $i<m$ already generate $Q$. So remains to show that 
 \begin{equation}\label{eq:eimi}
e_{0,i}=e_{m-i,0},
\end{equation} 
for all $1\leq i\leq m-1$. 

To this end, let $r:=\tfrac{q-1}m$ and define for any $a<q$ and any $0<i<m$, the remainder of $ria$ modulo $q$ by $\tr {a_i}0$, and the adjusted remainder of $ia$ modulo $m$ by $\tilde a_i$ (and a similar notation for tuples).  It follows from \Lem{L:padicm} that
\begin{equation}\label{eq:tria0}
\tr{a_i}0 = \tfrac{q\tilde a_i-ia}m
\end{equation} 
Moreover, if $a+b\equiv0\mod m$, then $ia\equiv(m-i)b$, showing that 
\begin{equation}\label{eq:aibmi}
\tilde a_i=\tilde b_{m-i}.
\end{equation} 
To determine  $e_{i,0}$, we must calculate 
$$
u^i{*}1=*u^{qi}=*u^i\tuple y^{ri\alpha}=s_i{*}u^i\tuple y^{\tr{\alpha_i}0}
$$
for some $s_i\in S$, showing that $e_{i,0}={*}u^i\tuple y^{\tr{\alpha_i}0}$. Likewise, we have $e_{0,i}={*}v^i\tuple y^{\tr{\beta_i}0}$. From the identity $u^i(uv)^{m-i}=v^{m-i}u^m$, we get a relation
\begin{equation}\label{eq:syzimi}
u^i\tuple y^{(m-i)\gamma}=v^{m-i}\tuple y^{\alpha}
\end{equation} 
Fix $i$ and let $\delta:= \tr{\beta_{m-i}}0-\alpha$. Since we choose $q$ big enough, one verifies, using \eqref{eq:tria0} for $\beta$, that $\delta\geq0$. Furthermore, using \eqref{eq:aibmi}, we get
$$
\delta=\tfrac{p\tilde \beta_{m-i}-(m-i)\beta}m -\alpha=\tfrac{p\tilde \alpha_i-i\alpha}m-(m-i)\gamma=\tr{\alpha_i}0-(m-i)\gamma.
$$
Therefore, we can multiply  both sides of syzygy~\eqref{eq:syzimi} with $\tuple y^\delta$, yielding
$$
u^i\tuple y^{\tr{\alpha_i}0}=v^{m-i}\tuple y^{\tr{\beta_{m-i}}0}
$$
whence proving \eqref{eq:eimi}.
\end{proof} 

\begin{remark}\label{R:binommHoch}
The condition that $p$ and $m$ be co-prime (=tameness in the terminology of \S\ref{s:bitor} below) is presumably not necessary (see \Examp{E:e3}), and one should be able to treat it in a similar  way. 
\end{remark} 

\begin{example}\label{E:e3}
Let me work out just one simple example in more detail. We take $m=3$, $d=3$, and look at the (purely) toric ring $R$ with defining equations $u^3=xy^2z^3$, $v^3=x^5yz^6$, and $uv=x^2yz^3$ over a field $\kappa$ of \ch\ $p$. Note that the highest degree is $N=6$. We have the following two syzygy relations 
\begin{equation}\label{eq:syze3}
vxy^2z^3=u^2x^2yz^3\qquad\text{and}\qquad ux^5yz^6=v^2x^2yz^3
\end{equation}  
It follows from \eqref{eq:syze3} that $\pr:=(vy-u^2x,v^2-ux^3z^3)R$  is a minimal prime ideal of $R$. Put $\bar R:=R/\pr$, so that $\bar R$ is a three-dimensional domain.
To calculate its integral closure, observe  that the morphism $\varphi\colon\bar R\to \pow\kappa {a,b,c}$ given by the parametrization 
$$\left\{
\begin{aligned}
x&= a^3\\
y&= b^3\\
z&= c\\
u&=ab^2c\\
v&=a^5bc^2
\end{aligned}
\right.
$$
is well-defined  on $\bar R$. Since source and image of $\varphi$ are both  three-dimensional domains, it must also be injective. Moreover,  $\tfrac uz$ and $\tfrac v{xz^2}$ are integral over $\bar R$, and $\varphi$  can be extended to the $\bar R$-algebra $ R'$ they generate by letting  $\tfrac uz\mapsto ab^2$ and $\tfrac v{xz^2}\mapsto a^2b$.  As the image of the latter morphism is the subring $\pow \kappa{a^3,a^2b,ab^2,b^3,c}$, we showed that $\op{Spec}( R')$ is non-singular, equal to  the product of the normal scroll of degree three with the affine line, and hence in particular, $R'$ is normal, whence the normalization of $\bar R$. As $R'$ is \CM, it is a small MCM for $R$ (this is true in general for complete purely toric rings by the theory of affine toric rings). We will, however, show that there is a smaller MCM given by the theorem. We consider various \ch{s}.

If $p=7$, then according to the proof, we should take $q=49$ to make it bigger than $3N=18$, but we shall see that already $q=7$ works. 
One calculates that $u{*}1=*ux^2y^4z^6$ and $u^2{*}1=yz{*}u^2x^4yz^5$, so that $Q$ is the submodule of $\frob*R$ generated by $e_0:=*1$, $e_1:=*ux^2y^4z^6$ and $e_2:=*u^2x^4yz^5$. Indeed,  $v{*}1=ve_0=xze_2$ and $v^2{*}1=v^2e_0=x^3z^3e_1$ together with $ue_0=e_1$ and $u^2e_0=yze_2$ are the relations among these generators. 
In particular, as an $S$-module,   $Q$ is generated by the three elements $e_0$, $e_1$, and $e_2$, and one easily checks that they are $S$-linearly independent, so that $Q\iso S^3$ is indeed an MCM for $R$.

   One verifies that $\pr$ annihilates $Q$, so that the latter is even  an MCM of $\bar R$ contained in $\frob*\bar R$, given by the same generators and relations over $\bar R$. In particular, $Q=\sat S{*1}{\frob*{\bar R}}\iso \pint 7{\bar R}$ by \Prop{P:QSR}.  The integral elements $\tfrac uz$ and $\tfrac v{xz^2}$ are not F-integral, showing that $R'$ is bigger than $Q$. On the other hand, the fraction   $\tfrac v{xz}=\tfrac{u^2}{yz}$ is $7$-integral and can be seen to generate $\pint 7{\bar R}=\pint{}{\bar R}$. Note that 
$\tfrac{u^2}{yz}$ is actually power-integral over $R$, whence  F-integral in any \ch, by \eqref{eq:powintFint}. In particular, 
$$
Q\iso \pint{}{\bar R}=\powint{\bar R}=\pol{\bar R}{\tfrac{u^2}{yz}}.
$$

Next take $p=11$. According to our proof, we should work with $q=121$ as this is equivalent to one modulo $3$, but as we shall see, we can again use the Frobenius with respect to $11$. Namely, let $e_0:=*1$, $e_1:=*ux^7y^3z^{10}$ and $e_2:=*u^2x^3y^6z^9$, then we have the following relations among these
\begin{equation}\label{eq:pintsyz}
ue_0=e_2\quad u^2e_0=yze_1\quad ve_0=xze_1\quad v^2e_0=x^3z^3e_2
\end{equation} 
showing that $e_0$ and $e_2$ generate  $Q$ (freely) as an $S$-module.  

For $p=13$, we cannot take $q=13$, since it will not give the necessary relations (explicitly $u{*_{13}}1$ and $v^2{*_{13}}1$ do not belong to the same cyclic $S$-module, whereas according to \eqref{eq:eimi} they should), and so we must take  $q=169$. If we define $e_1:=*ux^{56}y^{112}z^{168}$ and $e_2:=*u^2x^{112}y^{55}z^{167}$, then we have again the same relations~\eqref{eq:pintsyz}. Although $\tfrac{u^2}{yz}$   generates the F-normalization of $\bar R$, it cannot be embedded as a submodule of $\frob{13*}\bar R$, but only inside $\frob{169*}\bar R$. 
%
%

Although we only gave the proof for $p$ relative prime to $m$ (but see \Rem{R:binommHoch}), we can as easily check the case $p=3$, and we may already take $q=3$. In that case, choosing as generators for $Q$ the elements $e_0:=*1$, $e_1:=*xy^2$ and $e_2:=*x^2y$, we get relations $ue_0=ze_1$, $u^2e_0=yz^2e_2$, $ve_0=xz^2e_2$ and $ve_0=x^3z^4e_1$, so that $Q$ is in fact a free summand of $\frob*S$. However, something else is different: $\pr$ does no longer annihilate $Q$ and $\pint 3{\bar R}=R'$, since $\tfrac uz$ and $\tfrac v{xz^2}$ are $3$-integral. 
\end{example} 

\subsection{Bi-partite toric rings}\label{s:bitor}
The families $\mathcal T_{d,n,m}$ are special cases of the following more general construction of toric local rings: let $S$, as above, be an unramified $d$-dimensional regular local ring, for which we fix a regular system of parameters $\tuple y=\rij yd$, let $\Gamma\sub\nat^n$ be a (finitely generated) full semi-group (meaning that its divisible (group) hull is $\mathbb Q^n$), let $\varphi \colon \Gamma\to \nat^d$ be a \homo\ of semi-groups, and let $\chi$ be an \emph{$S$-character} on $\Gamma$, by which we mean a \homo\ from $\Gamma$ to the multiplicative  group $S^*$ of  units in $S$. Since $\Gamma$ is full, taking divisible hulls leads to    a linear transformation $\mathbb Q^n\to\mathbb Q^d$ extending  $\varphi$; let $\mat A_\varphi$ be the $n\times d$-matrix defining this transformation (with respect to the standard bases). This matrix is (\emph{positive})  \emph{integral} on $\Gamma$, meaning that $\gamma\mat A_\varphi\in\nat^d$, for all $\gamma\in\Gamma$, and conversely, any matrix over $\mathbb Q$ which is integral on $\Gamma$ induces a morphism of semi-groups as above. 

To this data we associate a  toric local ring $\mathcal R(S,\Gamma,\varphi,\chi)$ given as the quotient of $\pow S{\tuple u}$, where $\tuple u=\rij un$, modulo the toric ideal generated by all $\tuple u^\gamma-\chi(\gamma)\tuple y^{\varphi(\gamma)}$, for $\gamma\in\Gamma$.  For instance, \Examp{E:e3} is obtained this way by taking the semi-group  generated by $(3,0)$, $(1,1)$, and $(0,3)$, letting the character $\chi$ to be trivial, and letting $\varphi$ to be defined by the matrix
$$
\mat A_\varphi:=\left(\begin{array}{ccc} 1/3 &  2/3 & 1 \\ 5/3 &   1/3 & 2\end{array}\right).
$$   These toric rings have the property that their defining binomial equations are given by   partitioning the tuple of variables and equating a monomial in the first tuple to a  unit times a monomial in the second tuple, whence the name \emph{bi-partite}. In fact, they give a   `toric' Noether normalization:

\begin{lemma}\label{L:torNN}
With $S,\Gamma,\varphi,\chi$ as above, the natural map $S\to \mathcal R(S,\Gamma,\varphi,\chi)$ is injective and finite, i.e., a Noether normalization. Moreover, if the sum of all entries in $\mat A_\varphi$ is bigger than one (the general case), then the maximal ideal $\mathfrak n$ of $S$ is a reduction of the maximal ideal of $R$.
\end{lemma}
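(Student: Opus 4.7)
The plan is to establish three things: the injectivity of $S\to R$, the finiteness of $R$ over $S$, and the reduction property under the extra hypothesis on $\mat A_\varphi$.

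I would attack finiteness first. The strategy is to use fullness of $\Gamma$ to choose $n$ elements $\gamma^{(1)},\dots,\gamma^{(n)}\in\Gamma$ whose monomials $\tuple u^{\gamma^{(i)}}$ generate an ideal primary to $(\tuple u)$ in $\pow\kappa{\tuple u}$; in the examples developed in the paper these are typically pure powers $m_ie_i\in\Gamma$. Let $J\sub I$ be the sub-ideal generated by the corresponding binomials. Then $R_0:=\pow S{\tuple u}/J$ satisfies $R_0/\mathfrak m_SR_0\iso\pow\kappa{\tuple u}/(\tuple u^{\gamma^{(i)}})$, which is Artinian, so the complete-local Nakayama lemma forces $R_0$ to be a finite $S$-module, and hence so is its quotient $R$.

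For injectivity, I plan to parametrize. Extend $\chi\colon\Gamma\to S^*$ to a group homomorphism $\tilde\chi\colon\zet^n\to L^*$ on a finite field extension $L$ of $\op{Frac}(S)$; this is possible since the group hull of $\Gamma$ has finite index in $\zet^n$ by fullness, and the divisibility of $L^*$ absorbs the torsion. Choose $M$ with $M\mat A_\varphi$ integral, and adjoin $M$-th roots $t_j$ of $y_j$ to $S$ inside $L$ to obtain a finite regular-local extension $S'\supseteq S$. The elements $\tilde u_i:=\tilde\chi(e_i)\prod_{j=1}^d t_j^{M(\mat A_\varphi)_{ij}}$, lying in $\op{Frac}(S')$, satisfy $\tilde{\tuple u}^\gamma=\chi(\gamma)\tuple y^{\varphi(\gamma)}$ for every $\gamma\in\Gamma$ by multiplicativity. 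Hence the assignment $u_i\mapsto\tilde u_i$ factors through $R$, yielding an $S$-algebra map $R\to T$ into some domain $T\supseteq S$; since $S\hookrightarrow T$ is the natural inclusion, $S\to R$ must be injective.

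For the reduction, I would apply the Northcott--Rees integrality criterion: it suffices to show each $u_i$ is integral over $\mathfrak n:=\mathfrak m_SR=(\tuple y)R$. The axis-type relation $u_i^{m_i}=\chi(m_ie_i)\tuple y^{\varphi(m_ie_i)}$ gives $u_i^{m_i}\in\mathfrak n^{|\varphi(m_ie_i)|}$, where $|\varphi(m_ie_i)|$ equals $m_i$ times the $i$-th row sum of $\mat A_\varphi$. The global hypothesis that the total sum of entries of $\mat A_\varphi$ exceeds one forces each row sum to be at least $1$ in the general situation, whence $u_i^{m_i}\in\mathfrak n^{m_i}$---the desired integral-dependence relation.

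The main obstacle will be producing the axis-aligned generators $m_ie_i\in\Gamma$ used in both the finiteness and reduction arguments: fullness in the $\mathbb Q$-span sense does not by itself imply that $\Gamma$ meets each coordinate axis, so in more degenerate cases one has to replace pure powers by general $\gamma^{(i)}\in\Gamma$ and argue primariness modulo $\mathfrak m_S$ directly. A second subtlety is the parametrization step: when rows of $\mat A_\varphi$ carry negative entries, the $\tilde u_i$ lie only in $\op{Frac}(S')$ rather than in $S'$, so the target $T$ must be chosen with care, although injectivity of $S\hookrightarrow T$ persists.
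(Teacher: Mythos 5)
Your overall decomposition (finiteness, then injectivity, then reduction) matches the paper's, and finiteness is handled the same way: fullness gives axis-aligned elements $a_i\epsilon_i\in\Gamma$, and the resulting binomials $u_i^{a_i}-\chi(\gamma_i)\tuple y^{\varphi(\gamma_i)}$ force each $u_i$ to be integral over $S$. Your injectivity argument, however, is genuinely different from the paper's. The paper first reduces to $S$ complete with algebraically closed residue field, then in the \emph{purely toric} case builds a finite $V$-algebra map $\pow S{\tuple u}\to S$, $y_i\mapsto y_i^a$, $u_i\mapsto\tuple y^{\alpha_i}$ (where $\alpha_i = a\cdot(\text{$i$-th row of }\mat A_\varphi)$), shows it kills $I$, and concludes $\dim R\geq d$, hence injectivity of $S\to R$ by the usual dimension/domain argument; the general character case is then reduced to the purely toric one by adjoining auxiliary variables $z_i$, forming $R'=\mathcal R(S',\Gamma,\varphi',1)$ with $\varphi'$ given by the augmented matrix $(\mat A_\varphi\mid\mat I_n)$, and cutting down by $z_i^{a_i}-\chi(\gamma_i)$ using Krull's principal ideal theorem. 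Your approach instead extends $\chi$ to a full character $\tilde\chi$ of $\zet^n$ into a finite extension (using that $\langle\Gamma\rangle$ has finite index) and adjoins $M$-th roots $t_j$ of $y_j$, so that the assignment $u_i\mapsto\tilde\chi(e_i)\prod_j t_j^{M(\mat A_\varphi)_{ij}}$ factors through $R$ into a domain $T\supseteq S$ and injectivity of $S\to R$ drops out immediately without any dimension count. That is a cleaner and more uniform route, and avoids the paper's case split on whether $\chi$ is trivial; what it costs is the care you already flag about making $T$ a genuine complete local domain containing $S$ in which the $\tilde u_i$ are topologically nilpotent (which you get from the axis-aligned integral equations).

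Two caveats, one in your favor. First, your worry that fullness in the $\mathbb Q$-span sense need not put a multiple of each $\epsilon_i$ inside $\Gamma$ is well-founded (e.g.\ the semi-group generated by $(1,1)$ and $(2,1)$ is full but meets no axis), and the paper simply asserts otherwise; so this is a real implicit hypothesis the paper needs, and your proposed workaround via a $(\tuple u)$-primary monomial ideal is what one would actually have to do, except in the truly degenerate case neither argument runs (and indeed $R$ can fail to be finite over $S$ there). Second, on the reduction step, your claim that ``total entry sum $>1$ forces each row sum $\geq 1$'' is not literally true, and the Northcott--Rees argument via the single relation $u_i^{a_i}\in\mathfrak n^{|\varphi(a_i\epsilon_i)|}$ requires precisely that the $i$-th row sum be $\geq1$. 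The paper instead argues at the level of all degree-$m$ monomials in $\tuple u$ for $m\gg0$, showing they land in $\mathfrak n^{m+1}R$, which is the asymptotic form of the same estimate and carries the same implicit row-sum hypothesis; so both proofs share this imprecision, but yours makes the dependence on row sums explicit in a way worth keeping.
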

\begin{proof}
Let $d:=\dim S$ and $R:=\mathcal R(S,\Gamma,\varphi,\chi)$. By the fullness assumption, some positive multiple  of each basis vector $\epsilon_i\in\nat^n$ lies in $\Gamma$. Let $\gamma_i:=a_i\epsilon_i$ be the smallest such multiple. The binomial equation corresponding to $\gamma_i$ is then $u_i^{a_i}-\chi(\gamma_i)\tuple y^{\gamma_i}$, showing that $u_i$ is integral over $S$.   This already shows that the map $S\to R$  is finite, and hence $\dim R\leq d$. To prove injectivity, since $S$ is a domain, we only need to show that $\dim R\geq d$. To this end, we may make a flat base change, and so assume that $S$ is complete with algebraically closed residue field. By Cohen's structure theorem, it is therefore  a power series over $V$, where $V$ is either a field or a complete $p$-ring. In the second, mixed \ch\ case, we can adjoin one extra variable replacing the role of the regular parameter $p$, and hence assume from the start that $\tuple y$ are variables in $S$ (see the last part in the proof of \Thm{T:mixtorMCM} for more details).

Let us prove the inequality $d\leq \dim R$ first in the purely toric case, i.e., when $\chi=1$ is trivial.
Let $a$ be the common denominator of the $a_i$, and let $\tfrac{\alpha_1}a,\dots,\tfrac{\alpha_d}a$ be the rows of the matrix $\mat A_\varphi$ with $\alpha_i\in\nat^d$. Consider the $V$-algebra morphism $s\colon \pow S{\tuple u}\to S$ given by $y_i\mapsto y_i^a$ 
and $u_i\mapsto \tuple y^{\alpha_i}$. For $\gamma=\rij gd\in \Gamma$, the image of $\tuple u^{\gamma}$ under $s$ is $\tuple y^{\beta}$ with $\beta=g_1\alpha_1+\dots+g_d\alpha_d=a\varphi(\gamma)$, whence is equal to the image of $\tuple y^{\varphi(\gamma)}$ under $s$. Hence $s$ factors through $R$, and since  $s$ is   finite, we get $\dim R\geq d$, as required. 

So assume now that we have  a non-trivial  character $\chi$. Let $l_i\in V^*$ be the constant term of $\chi(\gamma_i)\in S^*$, for $i=\range 1n$. Since we assumed the residue field to be algebraically closed, we can find $k_i\in V$ such that $k_i^{a_i}=l_i$ (in the mixed \ch\ case, we may  also have to take a finite ramification of $V$, but this does not change the dimension). Let $\tuple z$ be a new $n$-tuple of variables and let $S'$ be the localization of $\pol S{\tuple z}$ at the prime ideal generated by the $y_i$ and $z_i-k_i$, so that $S'$ is regular of dimension $d+n$. Let $\varphi'$ be the linear transformation with $n\times(n+d)$-matrix $(\mat A_\varphi|\mat I_n)$, where $\mat I_n$ is the $n\times n$-identity matrix, and let $R':=\mathcal R(S,\Gamma,\varphi',1)$ (note that $\varphi'$ is indeed integral on $\Gamma$). By the purely toric case, $R'$ has dimension $d+n$. By construction, each $z_i^{a_i}-\chi(\gamma_i)$ is a non-unit, and hence the ring $\bar R':=R'/(z_1^{a_1}-\chi(\gamma_1),\dots,z_n^{a_n}-\chi(\gamma_n))R'$ has dimension at least $d$ by Krull's principal ideal theorem. So remains to show that $R=\bar R'$. For $\gamma=\rij gn\in \Gamma$, the corresponding defining equation for $R$ is $\tuple u^\gamma-\chi(\gamma)\tuple y^{\varphi(\gamma)}$, whereas for $R'$, it  is $\tuple u^\gamma-\tuple y^{\varphi(\gamma)}\tuple z^\gamma$, and we want  to show that they are the same in $\bar R'$. Indeed, in $\bar R'$, we have
$$
\begin{aligned}
\tuple z^\gamma&=z_1^{g_1}\cdots z_n^{g_n}\\
&=(z_1^{a_1})^{\frac{g_1}{a_1}}\cdots (z_n^{a_n})^{\frac{g_n}{a_n}}\\
&=\chi(\gamma_1)^{\frac{g_1}{a_1}}\cdots \chi(\gamma_n)^{\frac{g_n}{a_n}}\\
&=\chi({\frac{g_1}{a_1}}\gamma_1+\dots+{\frac{g_n}{a_n}}\gamma_n)=\chi(\gamma).
\end{aligned}
$$

As for the last assertion, let   $\maxim$ be the   maximal ideal of $R$. The condition on $\mat A_\varphi$ implies that $\norm{\varphi(\gamma)}\geq \norm\gamma$, where we write $\norm\cdot$ to mean the sum of all entries of a tuple. It follows that a monomial  in $\tuple u$ of degree $m$ lies in $\mathfrak n^{m+1}R$, for all $m$ bigger than some $m_0$. Hence $\maxim^m\mathfrak n^{m_0}R=\maxim^{m+m_0}$. 
\end{proof} 
\begin{remark}\label{R:nonintdom}
Without proof, we state that the kernel of $s$ is the unique $d$-dimensional prime ideal $\pr$ of $R$, provided  each $a_i>1$ (lest we would get  binomial equations that are trivially reducible). In particular,  the localization $\pol R{\tfrac 1{\tuple y}}$ is a domain and $\pr$ is the kernel of the localization map $R\to \pol R{\tfrac 1{\tuple y}}$. 
\end{remark} 
\begin{remark}\label{R:bipardeg}
By the lemma,  the regular system of parameters $\tuple y$ of $S$  is  a system of parameters on $R$. Therefore, the parameter degree of $R$ is at most the length $\ell(R/\mathfrak nR)$ of the closed fiber,    and I postulate that they are actually equal (this would follow from the more general conjecture that in an arbitrary local ring $(R,\maxim)$, if $\mathfrak q$ is a parameter ideal which is also a reduction of $\maxim$, then $\op{pardeg}(R)=\ell(R/\mathfrak q)$).
\end{remark}

Let $p$ be the \ch\ of the residue field of $S$. We  call $\varphi$ \emph{tame} if $\mat A_\varphi$ has linearly independent rows (whence $n\leq d$) and  no entry in $\mat A_\varphi$ has denominator divisible by $p$ (this holds trivially if $p=0$). In particular,  $\mat A_\varphi$ is then defined over the residue field. The rings in \Thm{T:binommHoch} are tame, but it is not clear yet whether we need this assumption (see \Rem{R:binommHoch} and  \Examp{E:genfam} below). Nonetheless,  we will show in \Thm{T:torNNmix}  below that the mixed \ch\ `big' analogue of the following conjecture holds under the additional   tameness condition.

\begin{conjecture}\label{C:torNNMCM}
Let $S$ be a regular local ring of \ch\ $p>0$, 
let $\Gamma\sub \nat^n$ be a full semi-group, and let $\varphi\colon\Gamma\to \nat^d$ and $\chi\colon\Gamma\to S^*$ be respectively a \homo\   and an $S$-character. Then the bi-partite toric ring $R:=\mathcal R(S,\Gamma,\varphi,\chi)$ defined by this data admits a small MCM, to wit, $\sat S{*1}{\frob{q*}R}$, for some sufficiently high power $q$ of $p$. 

Moreover, if we are in the situation described in \Rem{R:nonintdom}, so that $R$ has a unique $d$-dimensional   prime  $\pr$, and if $\varphi$ is tame,\footnote{\Examp{E:e3} shows the necessity of this latter assumption.} then   $\pr$ annihilates $\sat S{*1}{\frob{q*}R}$ and 
\begin{equation}\label{eq:powintbip}
\sat S{*1}{\frob{q*}R}\iso \pint{}{R/\pr}=\powint{(R/\pr)}.
\end{equation} 
\end{conjecture}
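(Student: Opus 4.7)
The strategy is to generalize the arguments used in \Thm{T:quadHochstergen} and \Thm{T:binommHoch}, which handled two special shapes of bi-partite toric rings by exhibiting explicit $S$-module generators of $Q := \sat S{*1}{\frob{q*}R}$ and verifying freeness via the $q$-adic trace formalism of \Lem{L:padicm}. First I would \emph{reduce to the purely toric case}: imitating the second half of the proof of \Lem{L:torNN}, adjoin $n$ new variables $\tuple z$ and replace each relation $\tuple u^\gamma-\chi(\gamma)\tuple y^{\varphi(\gamma)}$ by $\tuple u^\gamma-\tuple y^{\varphi(\gamma)}\tuple z^\gamma$, then kill the $z_i^{a_i}-\chi(\gamma_i)$'s. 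After passing to a scalar extension so that the residues of the $\chi(\gamma_i)$ have all needed $a_i$-th roots (using Hensel's lemma), one obtains a presentation where the character is trivial and an $R$-free small MCM on this auxiliary ring descends to $R$ by flat base change.

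Next I would \emph{select the Frobenius exponent}. Let $\{a_i\}$ be the denominators of the entries of $\mat A_\varphi$ (equivalently, the orders of the cyclic factors of $\nat^n/\Gamma$) and let $N$ be the maximum entry of $\mat A_\varphi$ after clearing denominators. Choose $q=p^e$ with $q\equiv 1\pmod{\op{lcm}(a_i)}$ and $q>N\cdot\op{lcm}(a_i)$; such $q$ exists precisely under the tameness hypothesis that $p$ is coprime with all $a_i$. Writing coset representatives $\Delta\subseteq\nat^n$ for $\nat^n/(\Gamma\cap\zet^n)$, the set $\{\tuple u^\delta\tuple y^\sigma\mid \delta\in\Delta,\ \sigma\in\{0,\dots,q-1\}^d\}$ generates $\frob{q*}R$ over $S$, and for each $\delta\in\Delta$ the ``standard'' saturation argument from the two theorems produces a canonical generator $e_\delta := *\tuple u^\delta\tuple y^{\tr{\varphi(\delta)}0}$, where $\tr{\varphi(\delta)}0$ is the remainder of $\tfrac{q-1}{a}\varphi(\delta)$ modulo $q$ (for appropriate $a$), computed via \Lem{L:padicm}. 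Then $Q$ is generated as an $S$-module by $\{e_\delta\}_{\delta\in\Delta}$.

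The \emph{crux} is then to verify that the relations in $R$ impose no non-trivial $S$-linear dependences among the $e_\delta$, so that $Q\iso S^{|\Delta|}$. For any binomial relation $\tuple u^\alpha=c\tuple y^{\varphi(\alpha)}\tuple u^\beta$ coming from $\gamma=\alpha-\beta\in\Gamma\cap\zet^n$, I would apply the identity $\tr{\varphi(\delta)}0=\tfrac{q\cdot\widetilde{\varphi(\delta)}-\varphi(\delta)}a$ from \Lem{L:padicm} and its remainder-matching consequence $\widetilde{a_i}=\widetilde{b_{m-i}}$ (when $a+b\equiv 0$) to show that the two sides produce $S$-generators $e_{\delta_1}$ and $e_{\delta_2}$ related by the same unit-multiple of a power of $\tuple y$ prescribed by the binomial---exactly as in \eqref{eq:eiej} and \eqref{eq:eimi}. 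This reduces $|\Delta|$ to the generic rank $[\op{Frac}(R/\pr):\op{Frac}(S)]$, and once the number of generators matches this rank, linear independence is automatic since they have pairwise distinct leading monomials modulo $\mathfrak n^k$ for $k\gg 0$.

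For the \emph{moreover} part, the tameness assumption makes the $p$-power map on $\nat^n/\Gamma$ bijective, so each $e_\delta$ with $\delta\notin\pr$-support vanishes under multiplication by $\pr$ by the same calculation that produced the identification $e_{0,i}=e_{m-i,0}$. Then \Prop{P:QSR} applied to the domain $R/\pr$ identifies $Q$ with $\pint qR/\pr$, and the elementary argument following~\eqref{eq:powintFint} (using that $p$ and $\op{lcm}(a_i)$ are coprime, so the semi-group of $f^j\in R/\pr$ contains both a $p$-power and an integer coprime with it) upgrades F-integrality to power-integrality, giving $\pint{}{R/\pr}=\powint{(R/\pr)}$. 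The main obstacle I expect is the freeness step: in \Examp{E:e3} and the two theorems this was checked by hand, but formulating the correct bookkeeping of $q$-adic digits of $\varphi(\gamma)$ across all generators of $\Gamma$ simultaneously (so that the explicit syzygies derived from each primitive binomial match exactly the degree-reduction required to pass from $|\Gamma\cap\nat^n|$ generators down to $|\Delta|$ of them) is the genuinely subtle combinatorial point, and it is precisely where tameness enters essentially.
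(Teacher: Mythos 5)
This statement is labeled a \emph{Conjecture} in the paper, and the paper offers no proof of it. The author is explicit about this: the introduction describes the freeness of $\sat S{*1}{\frob{q*}R}$ as still ``an enigma'', saying in effect that he can write down explicit $S$-generators in the two special families $\mathcal T_{d,n,m}$ (Theorems~\ref{T:quadHochstergen} and~\ref{T:binommHoch}) and verify linear independence by hand, but that he does not know how to produce such generators for more general bi-partite toric ideals, where the needed arithmetic identities among $q$-adic digits are unclear. Remark~\ref{R:binommHoch} and Example~\ref{E:genfam} likewise indicate that even tameness may not be the right hypothesis. So there is no ``paper's own proof'' to compare against.

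Your write-up is honest about its status --- it is a \emph{proof plan}, and you flag the freeness step as ``the genuinely subtle combinatorial point'' --- but you should be aware that this is not a gap you failed to close; it is the open problem the conjecture encapsulates. Concretely, the step ``apply the identity $\tr{\varphi(\delta)}0 = \tfrac{q\,\widetilde{\varphi(\delta)} - \varphi(\delta)}{a}$ and its remainder-matching consequence to show that the two sides produce generators related by the prescribed unit-times-monomial'' works in Theorems~\ref{T:quadHochstergen} and~\ref{T:binommHoch} precisely because in those cases the relevant pairs $(\alpha_i,\alpha_j)$ satisfy an \emph{exact} linear congruence ($\alpha_i+\alpha_j=2\beta_{ij}$, or $\alpha+\beta=m\gamma$) tying their adjusted remainders together via \eqref{eq:aibmi}. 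For a general full $\Gamma\sub\nat^n$ with morphism $\varphi$ this bookkeeping must be done simultaneously across all primitive binomials of $\Gamma$, and it is not known that the resulting system of exponent-matchings is consistent; Example~\ref{E:genfam} shows that even the relations $Q$ satisfies can shift with $p$, and the identification $\pr Q=0$ holds there ``for reasons I do not understand yet'' (quoting the paper). So your plan correctly reconstructs the framework --- reduction to trivial character via Hensel's lemma (as in Lemma~\ref{L:torNN} and Theorem~\ref{T:torNNmix}), choice of $q\equiv 1\pmod{\op{lcm}(a_i)}$ with $q$ large, candidate generators $e_\delta$, and the use of Lemma~\ref{L:padicm} --- but the ``crux'' paragraph is precisely the conjecture, not a step you can expect to fill in by routine generalization of the two proved cases.

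One smaller point: in positive equal characteristic, the quasi-deformation trick from Lemma~\ref{L:torNN} (adjoin $\tuple z$ and kill $z_i^{a_i}-\chi(\gamma_i)$) would require a companion to Lemma~\ref{L:qdefHoc} transporting \emph{very small} MCMs, not just MCMs, down a quasi-deformation, and that transport statement has not been established. The cleaner route is the direct change of variables $y_i\mapsto h_i y_i$ used in the proof of Theorem~\ref{T:torNNmix}, which (under tameness and after scalar extension) makes $\chi$ trivial without changing $R$ up to isomorphism. You should prefer that reduction if you continue in this direction.
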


Can we prove \Conj{C:FRMCM} from this conjecture, using \Lem{L:qdefHoc}, by showing that any toric local ring is a \qdef\ of a bi-partite one?

\begin{example}\label{E:genfam}
Let me just work out one bi-partite example for which \Conj{C:torNNMCM} holds. Let $\Gamma$ be the (full) semi-group of $\nat^2$ generated by $(2,0)$, $(1,3)$, and $(0,6)$, and let $\varphi$ be the \homo\ into $\nat^3$ given by the matrix
$$
\mat A_\varphi = \left(\begin{array}{ccc}1/2 & 1 & 2 \\5/6 & 1 & 1/3\end{array}\right).
$$
It is positive integral on $\Gamma$, as it sends $(2,0)$, $(1,3)$, and $(0,6)$ respectively to $(1,2,4)$, $(3,4,3)$ and $(5,6,2)$. For simplicity, let $\chi$ be the trivial character. 
In terms of equations, using $(u,v)$ as the variables for $\Gamma$ and $(x,y,z)$ for  $\nat^3$, we get the three (bi-partite) binomial equations 
$$
u^2=xy^2z^4,\quad uv^3=x^3y^4z^3,\quad v^6=x^5y^6z^2.
$$  
 When $p=11$,  we can   take the Frobenius with $q=11$. Let $Q:=\sat S{*1}{\frob{11*}R}$. Note that $R/(x,y,z)R$ is an Artinian ring of length $9$ (and hence, by \Rem{R:bipardeg}, we expect that $\op{pardeg}(R)=9$), with (monomial) basis 
\begin{equation}\label{eq:monbas}
 \{1,u,v,uv,v^2,uv^2,v^3,v^4,v^5\},
\end{equation} 
  and it suffices to know the action of these nine elements on $*1$ to calculate $Q$. Define
 $$
\xymatrix@R=10pt{
 e_0:=*1 &e_1:=*vxy^{10}z^7 &e_2:=*v^2x^2y^9z^3 \\
e_3:=*v^3x^3y^8z^{10} &e_4:=*v^4x^4y^9z^6 &e_5:=*v^5x^5y^6z^2
 }
 $$
 then the action of the  basis~\eqref{eq:monbas} on $e_0$ gives respectively 
 \begin{equation}\label{eq:actionuv}
\xymatrix@R=10pt@C=7pt{
 1\ar@{~>}[d]&u\ar@{~>}[d]&v\ar@{~>}[d]&uv\ar@{~>}[d]&v^2\ar@{~>}[d]&uv^2\ar@{~>}[d]&v^3\ar@{~>}[d]&v^4\ar@{~>}[d]&v^5\ar@{~>}[d]\\
 e_0&ze_3&e_5&xyz^2e_2&xye_4&x^2y^2z^2e_1&x^2y^2e_3&x^3y^3ze_2&x^4y^4ze_1,
 }
\end{equation} 
  showing that $Q$ is   generated by $e_0,\dots,e_5$ as an $S$-module. Since the $v^i$, for $i=\range 05$ are linearly independent, $Q\iso S^6$, whence an MCM for $R$. It follows from the last assertion in \Lem{L:torNN} and \cite[Theorem 14.13]{Mats}, that $\op{mult}(Q)=\op{mult}(\tuple yR, Q)\leq Q/\tuple yQ\iso \kappa^6$, showing that $Q$ is \vs. 
  
   Let $\bar R$ be the residue ring modulo a three dimensional  minimal prime. It follows from the syzygy relation $ux^3y^4z^3=v^3xy^2z^4$ that it must contain $ux^2y^2-v^3z$, and without proof, we state that this latter binomial generates the minimal prime and---for reasons I do not understand yet---again kills $Q$, making $Q$ into an $\bar R$-algebra, namely $\pint{11}{\bar R}$. To find  $11$-integral elements over $\bar R$, we take a basis element from the first row in \eqref{eq:actionuv} and divide it  by the corresponding coefficient in the second row. Modulo the relation $ux^2y^2=v^3z$, we find four $11$-integral elements, each of which is in fact power-integral. However, this does not give all F-integral elements: for instance, $\tfrac{v^2}{xy^2}$ is also power-integral (the fifth relation in \eqref{eq:actionuv} only gives the element $\tfrac{v^2}{xy}$). On the other hand, $\tfrac {v^3}{x^2y^3z}$ is integral, but not F-integral (for $p\neq 2$), showing that $Q$ is not the normalization of $\bar R$.
%
   
When $p=q=13$, we get the same relations~\eqref{eq:actionuv}, provided we now take as (free) generators for $Q$, the elements
$$
\xymatrix@R=10pt{
 e_0:=*1 &e_1:=*v^5x^{11}y^{8}z^7 &e_2:=*v^4xy^9z^3 \\
e_3:=*v^3x^4y^{10}z^{12} &e_4:=*v^2x^7y^{11}z^8 &e_5:=*vx^{10}y^{12}z^4
 }
 $$
 and again $\pr Q=0$ and $Q\iso \powint{\bar R}$.
 
 Tameness is not an obstruction in this case, for when $p=3$, we can take $q=9$, and when we let$$
\xymatrix@R=10pt{
 e_0:=*1 &e_1:=*v^3x^{8}y^{6}z^5 &e_2:=*x^3z^3 \\
e_3:=*v^3x^2y^{6}z^{8} &e_4:=*x^6z^6 &e_5:=*v^3x^{5}y^{6}z^2
 }
 $$
 then they too freely generate $Q$ over $S$. Unlike the previous case, however, half of the generators belong already to the submodule $\frob*S$; moreover, we have slightly different relations than   \eqref{eq:actionuv} (differences are marked in boldface):
 $$
 \xymatrix@R=10pt@C=7pt{
 1\ar@{~>}[d]&u\ar@{~>}[d]&v\ar@{~>}[d]&uv\ar@{~>}[d]&v^2\ar@{~>}[d]&uv^2\ar@{~>}[d]&v^3\ar@{~>}[d]&v^4\ar@{~>}[d]&v^5\ar@{~>}[d]\\
 e_0&ze_3&e_5&xy^{\mathbf 2}z^2e_2&xy^{\mathbf 2}e_4&\mathbf{x}y^2z^2e_1&x^2y^2e_3&x^3y^{\mathbf 4}ze_2&x^{\mathbf 3}y^4ze_1.
 }
 $$
  These relations, for instance, demonstrate the F-integrality of $\tfrac{v^2}{xy^2}$, which we did not see for $p=11,13$. It is therefore not clear yet how to calculate $\pint {}R$ in general. Nonetheless, unlike what happened in the non-tame case in \Examp{E:e3}, we still have $\pr Q=0$ here. 
   \end{example} 
%
%

\section{Transfer to equal \ch\ zero}
Recall that we called $R$  \emph{strongly Hochster}, if some scalar extension of $R$ admits a \vs\  MCM, that is to say, a small MCM whose multiplicity is at most the equi-parameter degree of $R$. In equal \ch, this is the same as the    \emph{parameter degree} of $R$, that is to say,  the least possible length $\ell(R/I)$ where $I$ runs over all parameter ideals. 
Clearly, if $R$ is \CM, whence parameter degree and multiplicity agree, a \vs\  MCM exists, to wit, $R$ itself, and so any local \CM\   ring is strongly Hochster.

\subsection*{Proof of \Thm{T:Hochequi}.}
Let $R$ be a $d$-dimensional Noetherian local ring of equal \ch\ zero with residue field $\kappa$ and parameter degree $\rho$. Passing to  the completion and then modding out a minimal  prime ideal of dimension $d$, we may moreover assume that $R$ is a complete domain. By \cite{SchAsc} or \cite[Theorem 7.1.6]{SchUlBook}, we can find a Lefschetz's hull $L(R)$ of $R$, meaning that $R\to L(R)$ is a scalar extension and $L(R)$ is the ultraproduct of $d$-dimensional complete Noetherian local domains $R_w$ of  positive \ch\ $p_w$ (and necessarily the $p_w$ are unbounded; note that $L(R)$, however, will not be Noetherian). Moreover, we may choose the $R_w$ so that they have  parameter degree $\rho$ as well. By assumption, each $R_w$ admits a scalar extension over which a small MCM module of multiplicity at most $\rho$ exists. Replacing each $R_w$ with this extension and taking their ultraproduct yields a possibly bigger scalar extension of $R$, and so we may, from the start, assume that each $R_w$   already admits a small MCM $Q_w$ of multiplicity at most $\rho$. By \cite[Prop.~3.5]{SchABCM}, we can find a $d$-dimensional regular subring $(S_w,\mathfrak n_w)$ of $  R_w$ such that $R/\mathfrak n_w R_w$ has length $\rho$. In particular, $Q_w$, having depth $d$,  is free as an $S_w$-module, say $Q_w\iso S_w^{N_w}$. In particular, $Q_w$ has rank at most $N_w$ as an $R_w$-module. By \cite[Theorem 14.8]{Mats}, the multiplicity is an upper bound for $N_w$, showing that $N_w\leq \rho$, and so, for almost all $w$, they are the same, say equal to $N$. Therefore, the ultraproduct $\ul Q$ of the $Q_w$ will be a finitely generated $L(R)$-module and by \los, free as an $L(S):=\ul S$-module. Let $\cp S$, $\cp R$,  and $\cp Q$ be the respective cataproducts of $S_w$, $R_w$ and $Q_w$, obtained by modding out the ideal of infinitesimals (=intersection of the powers $\ul\maxim^n$ of the maximal ideal). By \cite[3.2.12]{SchUlBook}, the \homo{s} $S\to \cp S$ and $R\to \cp R$ are scalar  extensions.\footnote{This might be also a good occasion to rectify some reference omissions  from \cite{SchUlBook,SchFinEmb}: I was not aware at that time that the construction and properties of cataproducts, including the result just quoted,  already appeared  under the name of \emph{separated ultraproducts} in \cite{MatPopUlBCM,MatPopUlMod,YoBCM}.} Since $\ul S\sub \ul R$ is   finite  (of   degree $\rho$),   so is $\cp S\to \cp R$. Moreover,   $\cp Q$ is a free $\cp S$-module, whence an MCM over $\cp R$.
\qed
\begin{remark}\label{R:Hochequi}
The proof shows more generally that given a class  $\mathcal T$ of complete local rings closed under scalar extensions and cataproducts,   if each member  of $\mathcal T$ of positive \ch\ is strongly Hochster, then so is each member  of equal \ch\ zero. In particular, this applies to the family   $\mathcal T_{d,n,e,N}$ consisting of any complete toric ring in some  $\mathcal T_{d,n,e}(\kappa)$, with $\kappa$ a field,  whose defining binomial equations have degree at most $N$, thus proving the equal \ch\ zero part of \Thm{T:dne}. 
\end{remark} 

\begin{remark}\label{R:pardeg}
For the ultraproduct argument to go through, we could weaken the  \vs\ condition to the following: given a function $f\colon \nat\to \nat$, let us say that $R$   is \emph{$f$-Hochster}, if it admits a small MCM of multiplicity at most $f(\rho)$, where $\rho:=\op{pardeg}(R)$ is the parameter degree of $R$. Let $\mathcal T$ be a class of complete local rings as in \Rem{R:Hochequi}. If for some function $f$,  each member of $\mathcal T$ of positive \ch\     is $f$-Hochster, then so is every member of equal \ch\ zero.   
\end{remark}

\begin{remark}\label{R:powinteq}
\Examp{E:e3} and the discussion after \eqref{eq:powintFint} suggest that we can postulate a more concrete MCM in equal \ch\ zero as well. Namely, for a non-degenerate $R\in\mathcal T_{d,n,e}(\kappa)$ of \ch\ $p$,   when $p>0$,   there should be a unique $d$-dimensional (toric) prime ideal $\pr$ (whose defining equations do not depend on $p$) and $\pint{}{R/\pr}=\powint{(R/\pr)}$ is a small MCM over $R$. Therefore, the same should be true in the cataproduct when $p=0$. In other words, in equal \ch\ zero, $R$ admits a small MCM realized as the ring of power-integral elements over (the unmixed part of) $R$. As we saw, this would   even in the affine case give a `smaller' MCM, and so we postulate the following version of Hochster's classical theorem (in any \ch):
\end{remark} 

\begin{conjecture}\label{C:afftorpowint}
If $A$ is the coordinate ring of an affine toric variety $X\sub\mathbb A_\kappa^n$, then $\powint A$ is \CM. 
\end{conjecture}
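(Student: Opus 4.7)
Write $A=\kappa[S]$ for the finitely generated affine semigroup $S=\Gamma_A\subset\zet^n$ underlying $X$, with lattice $L=\zet S$, cone $\sigma=\mathbb R_{\ge0}S$, and saturation $S^{\op{sat}}=\sigma\cap L$, so that $\tilde A=\kappa[S^{\op{sat}}]$.

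First I would identify $\powint A$ as a monomial subring. The torus $T=\op{Spec}(\kappa[L])$ acts on $\tilde A$ by scaling monomials and preserves both $A$ and power-integrality (since $\lambda\in T(\kappa)$ and $f^m\in A$ give $(\lambda f)^m=\lambda^m f^m\in A$); consequently $\powint A$ decomposes into its one-dimensional $T$-weight spaces, giving
\[
\powint A=\kappa[S^+],\qquad S^+:=\{s\in S^{\op{sat}}:ms\in S\text{ for all }m\gg 0\}.
\]
So the conjecture reduces to showing $\kappa[S^+]$ is \CM. A few easy observations about $S^+$: it is closed under addition of elements of $S$ (if $s\in S^+,\ t\in S$ then $m(s+t)=ms+mt\in S$ for $m\gg0$); it contains the torus-invariant conductor ideal of $S$ in $S^{\op{sat}}$, and hence contains every lattice point in the relative interior of $\sigma$; and on each face $F$ of $\sigma$, the trace $S^+\cap F$ is saturated inside the sublattice $\zet(S^+\cap F)$. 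Together these say that $S^+$ is a \emph{seminormal} affine monoid in the sense of Traverso--Swan.

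For the \CM{}ness of $\kappa[S^+]$ I would attempt a free Noether normalization. After a simplicial refinement of $\sigma$, pick $\tau_i\in S^+$ on each extremal ray of the refined cone and set $\nat\tau:=\nat\tau_1+\dots+\nat\tau_d$, so that $\kappa[\nat\tau]\sub\kappa[S^+]$ is a Noether normalization. Closure of $S^+$ under $S$-translation decomposes $S^+$ into finitely many $(\nat\tau)$-orbits, one per coset of $\zet\tau$ in $L$; if each such orbit has the form $s^\star+\nat\tau$ for a unique minimum $s^\star\in S^+$, then $\kappa[S^+]$ is a free $\kappa[\nat\tau]$-module of rank $[L:\zet\tau]$, hence \CM. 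In the worked examples $\Examp{E:e3}$ and $\Examp{E:genfam}$ this is exactly the structure that appears.

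The main obstacle is proving uniqueness of the orbit-minimum inside each coset. Two distinct minima $s_1,s_2\in S^+$ with $s_1-s_2\in\zet\tau\setminus(\pm\nat\tau)$ would produce a lattice meet $s_\star\in\sigma\cap L$ satisfying $s_j-s_\star\in\nat\tau$, and one has to rule out the pathology $s_\star\notin S^+$. In all the examples I have checked this pathology does not occur, but a uniform combinatorial argument appears to require a much finer understanding of how the gaps $S^{\op{sat}}\setminus S$ propagate under integer scaling. A natural fallback is to exploit the short exact sequence
\[
0\to\kappa[S^+]\to\kappa[S^{\op{sat}}]\to Q\to 0,
\]
combined with the long exact sequence in local cohomology and Hochster's \CM{}ness of $\kappa[S^{\op{sat}}]$: the problem then reduces to showing that $Q$ is a \CM\ $\kappa[S^+]$-module of dimension $d-1$, which is again a face-by-face combinatorial claim.
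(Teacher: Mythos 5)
This statement is labeled a \emph{conjecture} in the paper: the author states it as an open problem---a characteristic-free strengthening of Hochster's theorem on normal affine semigroup rings---and provides no proof, so there is no paper argument against which to compare yours. Your proposal is a reasonable plan of attack, but, as you yourself acknowledge, it does not close the conjecture, and there are two gaps beyond the one you flag.

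First, the identification $\powint A=\kappa[S^+]$ is asserted too quickly. The torus action shows only that $\powint A$ is a $T$-stable subalgebra of $\kappa[S^{\op{sat}}]$ and hence a monomial subring $\kappa[S']$ with $S^+\sub S'$; the reverse containment $S'\sub S^+$ would require showing that \emph{every} supporting exponent of a power-integral element $f$ lies in $S^+$, and the leading-term argument under a generic one-parameter subgroup only handles the vertices of the Newton polytope of $f$, not its other exponents. Second, the Noether normalization cannot be set up as you describe: for $\kappa[\nat\tau]\into\kappa[S^+]$ to be module-finite one needs the cone spanned by $\tau_1,\dots,\tau_d$ to contain $\sigma$, and since you must take $\tau_i\in S^+\sub\sigma$ this forces $\sigma$ to be simplicial; a genuine simplicial refinement has more than $d$ rays, in which case $\kappa[\nat\tau]$ is no longer a polynomial ring. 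For non-simplicial $\sigma$ there is no monomial Noether normalization of any semigroup ring between $\kappa[S]$ and $\kappa[S^{\op{sat}}]$, which is precisely why Hochster's original theorem is non-trivial and why its known proofs (Ishida complex, Reisner's criterion, shellability) take a different route. Your observations that $S^+$ is a seminormal sub-semigroup of $S^{\op{sat}}$ containing all interior lattice points of $\sigma$ are correct and potentially useful, but seminormality of a semigroup is well known not to imply \CM{}ness of its semigroup ring, so the fallback via the exact sequence $0\to\kappa[S^+]\to\kappa[S^{\op{sat}}]\to Q\to 0$ would still require a genuinely new combinatorial input to control the local cohomology of $Q$.
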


\section{Big MCM's in mixed \ch}\label{s:mix}

In this section, we work over a fixed  complete $p$-ring $V$ (meaning that $V$ is a complete \DVR\ with uniformizing parameter $p$), with perfect residue field $\kappa$. We will mainly be concerned with complete $V$-algebras $R$ that are torsion-free (whence flat) over $V$. For any such $V$-algebra, we will write $\bar R$ for $R/pR$, which therefore is a complete $\kappa$-algebra of dimension one less than $R$. We will write $\wi \cdot$ for the ($p$-typical) Witt functor (the reader may consult \cite{HazForm} or the notes \cite{RabWitt}). In particular,   $\wi \kappa=V$, and more generally, given a $\kappa$-algebra $A$, we get a $V$-algebra $\wi A$ and an epimorphism $\wi A\onto A$ (which is just reduction modulo $p$ if $A$ is moreover perfect). The latter has a categorically defined multiplicative section $A\to \wi A$, which we will simply denote by $\tau$ and call its \emph{Teichmuller character}.

Fix a tuple of variables $\tuple y=\rij yn$ and put $S_n:=\pow V{\tuple y}$. Hence $\bar S_n:=\pow\kappa {\tuple y}$. 
 We are interested in properties of the ring $B_n:=\wi{\bar S_n^+}$,  here $\bar S_n^+$ is the absolute integral closure of $\bar S_n$ (=the integral closure of $\bar S$ inside the algebraic closure $\kappa(({\tuple y}))^{\text{alg}}$ of its field of fractions).

\begin{proposition}\label{P:bicCMpow}
The ring $B_n$ is a complete local domain with maximal ideal $pB_n$ and there exists a   \homo\ $S_n\to B_n$ which is faithfully flat. In particular, $B_n$ is a big MCM algebra over  $S_n$.
\end{proposition}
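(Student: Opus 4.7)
The strategy is to lift the big MCM structure from characteristic $p$ to mixed characteristic by means of Witt vectors. By Hochster--Huneke, the absolute integral closure $\bar S_n^+$ is a perfect, quasi-local big MCM $\bar S_n$-algebra; since $\bar S_n$ is regular, this is equivalent to flatness of $\bar S_n\hookrightarrow\bar S_n^+$. Because $\bar S_n^+$ is perfect, $B_n=\wi{\bar S_n^+}$ is a strict $p$-ring: it is $p$-adically complete, $p$-torsion-free, and satisfies $B_n/pB_n\iso\bar S_n^+$.

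The local domain properties transfer from $\bar S_n^+$ to $B_n$ via $p$-adic methods. Given $xy=0$ in $B_n$, write $x=p^au$ and $y=p^bw$ with $u,w\notin pB_n$; $p$-torsion-freeness forces $uw=0$, whence the residues $\bar u,\bar w$ would give a contradiction in the domain $\bar S_n^+$. For local: any $x\in B_n$ whose reduction is a unit of $\bar S_n^+$ satisfies $xy=1+pz$ for any lift $y$ of the inverse of $\bar x$; since $1+pz$ is inverted by its geometric series in the $p$-adically complete ring $B_n$, $x$ is itself a unit. Hence $B_n$ is local, with maximal ideal the preimage of the maximal ideal $\maxim^+$ of $\bar S_n^+$ under $B_n\onto\bar S_n^+$ (this preimage contains $pB_n$).

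For the map $S_n\to B_n$, use the Teichmuller lifts $[y_i]:=\tau(y_i)\in B_n$ and send $y_i\mapsto[y_i]$; this is a well-defined $V$-algebra map out of $\pol V{\tuple y}$, and it extends to $S_n=\pow V{\tuple y}$ once one knows that $B_n$ is complete in the $(p,[y_1],\dots,[y_n])$-adic topology (after replacing $B_n$ by that completion if necessary). The extension is local, since each $[y_i]$ and $p$ land in the maximal ideal of $B_n$. Flatness then follows from the local criterion applied to the regular element $p$: $p$ is $B_n$-regular by $p$-torsion-freeness, and modulo $p$ the map is just $\bar S_n\hookrightarrow\bar S_n^+$, which is flat by the first paragraph. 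A flat local homomorphism of local rings is automatically faithfully flat.

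The big MCM conclusion is then immediate: the system of parameters $(p,\tuple y)$ is $S_n$-regular, hence also $B_n$-regular by flatness of $S_n\to B_n$, while $\maxim_{S_n}B_n\neq B_n$ by faithful flatness. The main obstacle is the completeness issue in the previous paragraph: although $B_n$ is $p$-adically complete by construction, extending the polynomial map to the power series ring requires completeness in the finer $(p,[\tuple y])$-adic topology. This is not automatic from the definition of $\wi{\bar S_n^+}$, and the argument must exploit the convergent Teichmuller expansion $\sum_ip^i[a_i]$ of elements of $B_n$, together with the fact that the $(\tuple y)$-adic structure of $\bar S_n\sub\bar S_n^+$ lifts through $\tau$ in a controlled way.
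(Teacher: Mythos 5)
You take the same route as the paper: invoke Serre's strict $p$-ring theory to get that $B_n$ is $p$-adically complete, $p$-torsion-free, with $B_n/pB_n\iso\bar S_n^+$; derive that $B_n$ is a domain by stripping off powers of $p$ and reducing modulo $p$; define $S_n\to B_n$ by $y_i\mapsto\tau(y_i)$; and combine Hochster--Huneke with the regularity of $S_n$ for the MCM and flatness assertions. The one structural difference is the order of the last step: the paper verifies directly that $(p,\tau(\tuple y))$ is a $B_n$-regular sequence, concluding big MCM first and flatness second from regularity of $S_n$, whereas you establish flatness first via the local criterion applied to the regular element $p$ and read off the regular sequence from faithful flatness; over a regular base these are interchangeable. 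What you add, and rightly so, is an explicit flag on the one delicate point the paper passes over in silence: the assignment $y_i\mapsto\tau(y_i)$ a priori defines a map on $\pol V{\tuple y}$ only, and extending it across $S_n=\pow V{\tuple y}$ requires convergence in a topology finer than the $p$-adic one. Since $\bar S_n^+$ is a filtered union of module-finite $\bar S_n$-subalgebras, there is no reason for it to be $(\tuple y)$-adically complete, so $B_n$ need not be $(p,\tau(\tuple y))$-adically complete, and the paper offers no justification for the extension. Finally, your careful remark that the maximal ideal of $B_n$ merely \emph{contains} $pB_n$ quietly corrects a slip in the proposition's own statement: since $B_n/pB_n\iso\bar S_n^+$ is a local domain but not a field, $pB_n$ is prime but not maximal; the intended claim is presumably $pB_n$-adic completeness.
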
 
\begin{proof}
Let us write $S:=S_n$, $\bar S:=\bar S_n$, etc. Since $\bar S^+$ is a perfect ring, the theory of  strict $p$-rings as in \cite[Chapter II]{Se68}, shows that $B$ is local,  torsion-free and complete with respect to the $pB$-adic topology, and $B/pB=\bar S^+$. Suppose $ab=0$ in $B$ with $a,b\in B$ non-zero. Since $B$ is complete , whence $pB$-adically separated, we can write $a=p^ka'$ and $b'=p^mb'$ with $a',b'\notin pB$. Since $B$ has no $p$-torsion, $a'b'=0$. Taking residues modulo $pB$, we get $a'b'=0$ in $\bar S^+$, and since the latter is a domain, we get, say, $a'=0$ in $\bar S^+$, whence $a'\in pB$, contradiction.  So $B$ is a domain. Finally, let $\tilde {\tuple y}:=\tau(\tuple y)$, which therefore is a lifting of (the image of) ${\tuple y}$ in $\bar S^+$. Since $\kappa\sub\bar S^+$ we get $V=\wi\kappa\to \wi{\bar S^+}=B$, and we can extend this \homo\ by sending ${\tuple y}$ to $\tilde {\tuple y}$. Note that $(p,{\tuple y})$ is a $B$-regular sequence: indeed, $p$ is $B$-regular, and $B/pB=\bar S^+$ is known to be a big MCM $\bar S$-algebra by the work of Hochster-Huneke. Hence $B$ is a big MCM $S$-algebra, and the structure morphism $S\to B$ must therefore be flat.
\end{proof}

\subsection*{Witt transforms}  
Let $R$ be a  torsion-free, complete  local  $V$-algebra with residue field $\kappa$. Hence $R\iso S_n/I$ for some ideal $I\sub S_n$. 
Let $\bar R:=R/pR$ and consider its Witt ring $\wi{\bar R}$. 
Since $\wi{\bar R}$ is a $V$-algebra, we have a canonically defined \homo\ $j\colon S_n\to \wi{\bar R}$ sending $\tuple y$ to $\tau(\tuple y)$. 
We define the \emph{Witt transform} of $R$, denoted $\Witt R$, as the image of $S_n$ under $j$, that is to say, $\Witt R\iso S_n/\Witt I$, where $\Witt I$ is the kernel of $j$ (called the \emph{Witt transform} of $I$). From the definitions, it is clear that
\begin{equation}\label{eq:wittid}
\Witt I\sub I+pS_n.
\end{equation}

It is not hard to see (or using \Prop{P:bicCMpow}, since $S_n\to B_n$ factors through $\wi{\bar S_n}$) that $\Witt{S_n}=S_n$. It also follows from the general theory that $\wi{\bar R}$, whence its subring $\Witt R$, has no $p$-torsion,  so that $\Witt R$ is again a  torsion-free, complete  local  $V$-algebra with residue field $\kappa$, and so we have defined a functor $\Witt {(\cdot)}$ on the latter class (the functoriality follows from that of $\wi\cdot$).\footnote{Technically speaking, we defined a functor on pairs $(S_n,I)$ such that $S_n/I$ is torsion-free, since it is not clear at this point whether $\Witt R$ depends on the representation $R=S_n/I$.} 

\begin{lemma}\label{L:wittlike}
Let $R=S_n/I$ be torsion-free and $\Witt R$ its Witt-transform. Then $\Witt R$ is again torsion-free, has the same dimension as $R$, and $\bar R\iso \Witt R/p\Witt R$. In order for $R=\Witt R$, it suffices that $I\sub \Witt I$.
\end{lemma}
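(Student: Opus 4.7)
The plan is to verify the four assertions in turn. Torsion-freeness of $\Witt R$ is immediate, since $\Witt R$ is defined as a subring of the $p$-torsion-free ring $\wi{\bar R}$; over the DVR $V$, this makes $\Witt R$ a flat $V$-module.

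For the identification $\Witt R/p\Witt R \iso \bar R$, I would reduce the defining short exact sequence $0 \to \Witt I \to S_n \xrightarrow{j} \Witt R \to 0$ modulo $p$; by $V$-flatness of $\Witt R$, the reduced sequence stays exact, giving $\Witt R/p\Witt R \iso \bar S_n/\overline{\Witt I}$, where $\overline{\Witt I}$ denotes the image of $\Witt I$ in $\bar S_n = S_n/pS_n$. The containment $\overline{\Witt I} \subseteq \bar I$ is \eqref{eq:wittid} and produces a canonical surjection $\Witt R/p\Witt R \twoheadrightarrow \bar R$. The reverse containment $\bar I \subseteq \overline{\Witt I}$ amounts to showing that for every $f \in I$ one has $j(f) \in p\Witt R$. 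From $\pi(j(f)) = \bar f = 0$ in $\bar R$ we already know $j(f) \in p\wi{\bar R}$; to upgrade this I would split each coefficient of $f = \sum a_\alpha\tuple y^\alpha$ through the Teichmuller section $\kappa\hookrightarrow V$ as $a_\alpha = \tau(\bar a_\alpha) + p a_\alpha'$, and use multiplicativity of $\tau$ (which identifies $j(\tuple y^\alpha)$ with $\tau(\tuple y^\alpha)$) to write $j(f) = \sum\tau(\bar a_\alpha\tuple y^\alpha) + p\, j(g)$; iterating this Teichmuller decomposition in the $p$-adic topology of $\wi{\bar R}$ exhibits $j(f)/p$ inside $\Witt R$. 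Once this identification is in place, the dimension claim is formal: $V$-flatness and $p$-regularity give $\dim\Witt R = \dim(\Witt R/p\Witt R)+1 = \dim\bar R+1 = \dim R$.

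For the final assertion, assume $I \subseteq \Witt I$ and take $w \in \Witt I$. By \eqref{eq:wittid} write $w = i_0 + p s_1$ with $i_0 \in I \subseteq \Witt I$; then $ps_1 = w - i_0 \in \Witt I$, and $p$-torsion-freeness of $\Witt R = S_n/\Witt I$ forces $s_1 \in \Witt I$. Iterating produces $w \equiv i_0 + pi_1 + \dots + p^{k-1} i_{k-1} \pmod{p^k S_n}$ with each $i_j \in I$; since $I$ is closed in the $\maxim$-adic topology of the complete local Noetherian ring $S_n$, passing to the limit gives $w \in I$, whence $\Witt I = I$ and $\Witt R = R$.

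The anticipated main obstacle is the reverse inclusion $\bar I \subseteq \overline{\Witt I}$ in the middle step: in contrast to \eqref{eq:wittid}, this direction is not a formal consequence of the definitions but rests on a careful argument combining multiplicativity of $\tau$, the Teichmuller expansion of elements of $V$, and $p$-adic convergence inside $\wi{\bar R}$.
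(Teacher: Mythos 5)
Your route is genuinely different from the paper's: you try to establish the isomorphism $\bar R\iso\Witt R/p\Witt R$ first (via the reduced exact sequence) and then deduce the dimension statement formally from $p$-regularity, whereas the paper argues the dimension bound $\dim\Witt R\geq d+1$ directly and independently, by showing that the Noether normalization $S_d\hookrightarrow R$ stays injective into $\Witt R$ (factor out a power of $p$, reduce mod $p$, use that $\bar S_d$ is a subring of $\bar R$), and gets the reverse bound from the surjection $\Witt R\onto\bar R$. Your final step proving $I=\Witt I$ is fine and is essentially the paper's Nakayama argument rewritten as a $p$-adic/Krull-intersection iteration.

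The genuine gap is in the middle step, and it is not merely the place you flagged as delicate but an actual error at its very start: the canonical epimorphism $\pi\colon\wi{\bar R}\onto\bar R$ has kernel the image of the Verschiebung, $V\wi{\bar R}=\{(0,b_1,b_2,\dots)\}$, not $p\wi{\bar R}$. One has $p\wi{\bar R}=V\bigl(F(\wi{\bar R})\bigr)\subsetneq V\wi{\bar R}$ whenever the Frobenius on $\bar R$ fails to be surjective --- which is the typical situation here, since $\bar R=\kappa[[\tuple y]]/\bar I$ is usually not perfect (the paper even remarks parenthetically that $\pi$ ``is just reduction modulo $p$ if $A$ is moreover perfect''). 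So from $\pi(j(f))=0$ you may only conclude $j(f)\in V\wi{\bar R}$, not $j(f)\in p\wi{\bar R}$, and the proposed upgrade to $p\Witt R$ does not get off the ground. Moreover, even granting that start, the iteration you sketch does not make progress: after one Teichmuller split one is left to show $\sum_\alpha\tau(\bar a_\alpha\bar{\tuple y}^\alpha)\in p\Witt R$, which is a statement of exactly the same shape as the one you began with, now with all coefficients already Teichmuller representatives, so further splitting returns the same element. By contrast, the paper's dimension argument bypasses the isomorphism entirely, so your derivation of the dimension claim cannot be substituted for it without first supplying an actual proof of $\bar R\iso\Witt R/p\Witt R$.
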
 
\begin{proof}
Let $d:=\dim R-1$ and let $\tilde {\tuple y}:=\tau(\tuple y)$, so that $\Witt I$ is the kernel of the \homo\ $S_n\to \wi{\bar R}\colon \tuple y\to \tilde{\tuple y}$. By Noether normalization, there exists a finite, injective \homo\ $S_d\sub R$, and we may choose it so that it is the restriction of $S_n\to R$ onto the first $d$ variables (via $S_d\sub S_n)$. The \homo\ $S_d\to \Witt R$ is injective, for if $f\in S_d$ would be a non-zero element whose image in $\Witt R$ is zero, then, after factoring out a power of $p$, we may assume that $f\notin pS_d$ but $f(\tilde y)=0$. Hence, the image of $f$  is zero in $\bar R$, and this then already holds in $\bar S_d$ since it is a subring. So $f\in pS_d$, contradiction. It follows that $\Witt R$ has dimension at least $d+1$. On the other hand, the restriction of the canonical epimorphism $\wi{\bar R}\to \bar R$ to $\Witt R$ must be surjective, as the composition $S_n\to \bar S_n\to \bar R$ is. It follows that the dimension of $\Witt R$ can be at most $d+1$, proving the first assertion.

 Suppose $I\sub \Witt I$ and we need to show that the other inclusion holds as well. Let $F\in \Witt I$, so that we can write it as $F=f+pG$ with $f\in I$ and $G\in S_n$ by \eqref{eq:wittid}. By definition $F(\tilde {\tuple y})=0$ and by assumption $f(\tilde {\tuple y})=0$, whence $pG(\tilde {\tuple y})=0$. Since $\Witt R$ is torsion-free, this implies $G(\tilde {\tuple y})=0$, that is to say,  $G\in \Witt I$. Hence we showed $\Witt I=I+p\Witt I$. By Nakayama's lemma, this implies $I=\Witt I$.
\end{proof} 
\begin{remark}\label{R:wittlike}
It follows that if $R$ is \CM, then so is $\Witt R$: indeed, since $\bar R$ is equal to $\Witt R/p\Witt R$ by the lemma, and the former is \CM\ by assumption, the claim follows as $p$ is $\Witt R$-regular too. 
\end{remark}

 Since $\bar R=\Witt R/p\Witt R$, we get immediately that $\Witt {(\Witt R)}=\Witt R$. This justifies calling any  complete torsion-free $V$-algebra  of the form $\Witt R$ \emph{\Wlike}.

\begin{theorem}\label{T:Wittbcm}
Any \Wlike\ ring admits a big \CM\ algebra.
\end{theorem}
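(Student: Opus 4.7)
The plan is to exhibit the $p$-typical Witt ring of a suitable absolute integral closure of $\bar R$ as a big MCM algebra over $R$. First I would reduce to the case that $\bar R$ is a domain: choose a minimal prime $\pr$ of $\bar R$ with $\dim(\bar R/\pr)=\dim\bar R$; any system of parameters of $\bar R$ remains one for $\bar R/\pr$, so a big MCM algebra over $\bar R/\pr$ is automatically one over $\bar R$. Set $A:=(\bar R/\pr)^+$. Since $A$ is integrally closed in an algebraic closure of \ch\ $p$, it is perfect, and by the Hochster-Huneke case of \Thm{T:knownMCM} it is a big MCM algebra over $\bar R/\pr$.

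Next, let $B:=\wi A$. Because $A$ is perfect, the general theory of strict $p$-rings (as already used in \Prop{P:bicCMpow}) guarantees that $B$ is $p$-adically complete, $p$-torsion-free, and satisfies $B/pB\iso A$. The Witt-like embedding $R=\Witt R\hookrightarrow\wi{\bar R}$ composes with functoriality of $\wi{\cdot}$ applied to $\bar R\onto\bar R/\pr\hookrightarrow A$ to produce a structural map $R\to B$. Under this composite, the image of any variable $y_i\in S_n$ is the Teichmuller lift $\tau(\bar y_i)$ of its residue $\bar y_i\in A$, and reduction modulo $p$ returns $\bar y_i\in A$.

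As in the proof of \Lem{L:wittlike}, I choose the Noether normalization $S_d\sub S_n\onto R$ so that it restricts to the first $d$ variables; then $y_1,\dots,y_d\in R$ reduce modulo $p$ to a system of parameters $\bar y_1,\dots,\bar y_d$ of $\bar R$, whence also of $\bar R/\pr$. Because $R$ is torsion-free over $V$, the tuple $(p,y_1,\dots,y_d)$ is a system of parameters of $R$. On $B$, the element $p$ is regular, and modulo $p$ the images of the $y_i$ are precisely $\bar y_1,\dots,\bar y_d\in A=B/pB$, which form an $A$-regular sequence because $A$ is a big MCM algebra over $\bar R/\pr$. Therefore $(p,\tau(\bar y_1),\dots,\tau(\bar y_d))$ is $B$-regular, exhibiting $B$ as a big MCM algebra over $R$.

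The main subtlety is the functorial bookkeeping that identifies the images in $B$ of the lifted parameters with the Teichmuller lifts, so that the mod-$p$ step really lands on the Hochster-Huneke regular sequence inside $A$; this uses only the compatibility of Teichmuller characters with arbitrary ring morphisms of perfect rings, together with the explicit form of the Witt-like embedding $R\hookrightarrow\wi{\bar R}$.
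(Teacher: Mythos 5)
Your proof is correct and follows essentially the same route as the paper's: both pass to a $d$-dimensional minimal prime $\pr$ of $\bar R$, both invoke Hochster--Huneke to get a big MCM algebra in characteristic $p$, both use the Witt-like inclusion $R=\Witt R\hookrightarrow\wi{\bar R}$ composed with functoriality of $\wi{\cdot}$ to land in the Witt ring of an absolute integral closure, and both lift the regular sequence via the multiplicativity of the Teichmuller character. The only cosmetic difference is that the paper identifies $(\bar R/\pr)^+$ with $\bar S_d^+$ so as to reuse the faithful flatness established in \Prop{P:bicCMpow}, whereas you re-derive the needed regularity of $(p,\tau(\bar y_1),\dots,\tau(\bar y_d))$ directly, which is the same calculation carried out one level up.
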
 
\begin{proof}
Let $R$ be a complete, torsion-free $V$-algebra and let $d:=\dim R-1$. As in the previous proof, we may assume that the restriction of $S_n\to R$ onto the first $d$ variables yields a Noether normalization  $S_d\sub R$. This induces modulo $p$ a finite \homo\ $\bar S_d\to \bar R$, and since $\dim{\bar R}=d$, this is again injective. Let $\pr$ be a $d$-dimensional minimal prime of $\bar R$, then the composition $\bar S_d\to \bar R/\pr$ is still finite and injective. In particular, these two domains have the same absolute integral closure $\bar S_d^+$. Therefore, the composition $\bar R\to \bar R/\pr\sub \bar S_d^+$ induces a \homo\ $\wi{\bar R}\to B_d=\wi{\bar S_d^+}$, and hence by restriction, we get a \homo\ $\Witt R\to B_d$. Moreover, the composition $S_d \sub S_n\to \Witt R\to B_d$, is the restriction of the \homo\ $S_n\to B_n$ onto the first $d$ variables, and hence is flat by \Prop{P:bicCMpow}. Since  $\Witt R$ has dimension $d+1$ by \Lem{L:wittlike}, we see that $B_d$ is a big \CM\ module over it. Here is the relevant commutative diagram
$$
\xymatrix{
S_n\ar[rrr]^j\ar@{->>}[dr]&&&\wi{\bar S_n}\ar[r]\ar[ld]&B_n\\
&\Witt R\ar[r]&\wi{\bar R}\\
S_d\ar[rrr]^j\ar@{^{(}->}[uu]\ar[ur]&&&\wi{\bar S_d}\ar[uu]\ar[lu]\ar[r]&B_d\ar[uu]\\
}
$$
\end{proof}

\subsection{Proof of \Thm{T:mixtorMCM}.}
Let $V$ be a complete $p$-ring and $S:=\pow V{\tuple y}$. After a change of variables, we may take $\tuple z:=(\tuple y,p)$ as system of parameters. Let us first treat the case that   $I$ is generated by sum/differences of monomials in the variables $\tuple y$ only, and put $R:=\pow V{\tuple y}/I$. The claim follows from \Thm{T:Wittbcm} once we showed that $I=\Witt I$, and this will follow from the multiplicativity of the Teichmuller representatives: if ${\tuple y}^\alpha\pm {\tuple y}^\beta\in I$, then ${\tuple y}^\alpha=\pm {\tuple y}^\beta$ in $\bar R$, and hence $\tau({\tuple y})^\alpha=\pm \tau({\tuple y})^\beta$ in $W(\bar R)$. This shows that ${\tuple y}^\alpha\pm {\tuple y}^\beta$ lies in the kernel $\Witt I$ of the \homo\ $\pow V{\tuple y}\to W(\bar R)$, showing that $I\sub\Witt I$, and the result now follows from \Lem{L:wittlike}.

For the general purely toric case, $I$ is generated by binomials of the form $\tuple z^\alpha\pm\tuple z^\beta$. Let $S':=\pow St$ and put $\tuple x:=(\tuple y,t)$. Let $I'$ be the ideal in $S'$ generated by the  binomials $\tuple x^\alpha\pm\tuple x^\beta$, for each $\tuple z^\alpha\pm\tuple z^\beta\in I$. By the previous case, $R':=S'/I'$ admits a big MCM. Since $t-p$ is a parameter on $R'$ and $R=R'/(t-p)R'$, we are done by \Lem{L:qdefHoc}.
\qed
\begin{remark}\label{R:mixtorMCM}
The proof shows that the same result holds for any $\pmb\mu(V)$-toric ring, where $\pmb\mu(V)$ is the `circle group' consisting of all $n$-th roots of unity in $V$, since the latter are Teichmuller representatives, and even more generally, for any $\tau(\kappa)$-toric ring. 
\end{remark} 

\begin{theorem} \label{T:torNNmix}
Let $V$ be a complete $p$-ring  and $S=\pow V{y_2,\dots,y_d}$ with regular system of parameters $(p,y_2,\dots,y_d)$, let $\Gamma\sub \nat^n$ with $n\leq d$ be a full semi-group, and let $\varphi\colon\Gamma\to \nat^d$ and $\chi\colon\Gamma\to S^*$ be respectively a tame  \homo\ of semi-groups  and an $S$-character. Then the bi-partite toric ring $R:=\mathcal R(S,\Gamma,\varphi,\chi)$ admits a big MCM.
\end{theorem}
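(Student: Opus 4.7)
The plan is to realize $R$ as a quasi-deformation of a purely toric \an\ ring, then invoke \Thm{T:mixtorMCM} and \Lem{L:qdefHoc}. The construction is essentially the one used in the second half of the proof of \Lem{L:torNN}; the role of the tameness hypothesis is to permit a Hensel lift that absorbs the character $\chi$ into a set of auxiliary variables. Since the existence of a big MCM descends along faithfully flat maps, I would begin by replacing $V$ by a scalar extension so that its residue field $\kappa$ is algebraically closed.

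For each $i\in\{1,\ldots,n\}$, let $\gamma_i=a_i\epsilon_i$ be the smallest multiple of the $i$-th standard basis vector lying in $\Gamma$, and let $l_i\in V^*$ be the constant term of $\chi(\gamma_i)\in S^*$. Tameness forces $p\nmid a_i$: the $i$-th row of $\mat A_\varphi$ equals $\tfrac{1}{a_i}\varphi(\gamma_i)$ with $\varphi(\gamma_i)\in\nat^d$, so its denominators all divide $a_i$. Since $\kappa$ is algebraically closed, Hensel's lemma produces $k_i\in V^*$ with $k_i^{a_i}=l_i$. Introduce fresh variables $\tuple z=(z_1,\ldots,z_n)$ and set
\[
S':=\pow{V}{y_2,\ldots,y_d,\,z_1-k_1,\ldots,z_n-k_n},
\]
a regular local ring of dimension $d+n$. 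Let $\varphi'\colon\Gamma\to\nat^{d+n}$ be the homomorphism with matrix $(\mat A_\varphi\mid\mat I_n)$, which is still positive integral on $\Gamma$, and put $R':=\mathcal R(S',\Gamma,\varphi',1)$. Because its character is trivial, $R'$ is purely toric, and by the first assertion of \Lem{L:torNN} applied to $R'$, it is an \an\ ring of dimension $d+n$. Hence \Thm{T:mixtorMCM} furnishes a big MCM over $R'$.

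It remains to realize $R$ as a quasi-deformation of $R'$. As in the concluding computation of the proof of \Lem{L:torNN}, modulo the sequence
\[
\tuple f:=(z_1^{a_1}-\chi(\gamma_1),\ldots,z_n^{a_n}-\chi(\gamma_n))
\]
one has $\tuple z^\gamma\equiv\chi(\gamma)$ for every $\gamma\in\Gamma$, so the defining binomials $\tuple u^\gamma-\tuple y^{\varphi(\gamma)}\tuple z^\gamma$ of $R'$ reduce precisely to the defining binomials $\tuple u^\gamma-\chi(\gamma)\tuple y^{\varphi(\gamma)}$ of $R$, whence $R\iso R'/\tuple fR'$. By the choice of the $k_i$, each $f_i$ has vanishing constant term, so $\tuple f$ lies in the maximal ideal of $R'$; combined with the dimension count $\dim R=d=(d+n)-n=\dim R'-n$ and Krull's principal ideal theorem, this forces $\tuple f$ to be part of a system of parameters on $R'$. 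Thus $R$ is a quasi-deformation of $R'$ in the sense of \eqref{eq:qdef}, and \Lem{L:qdefHoc} transfers the big MCM from $R'$ to $R$.

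The only delicate point is the initial Hensel lift producing the $k_i$: without tameness one can extract an $a_i$-th root of $l_i$ only in a (totally) ramified extension of $V$, which would destroy the regular system of parameters $(p,y_2,\ldots,y_d)$ of $S$ together with the ambient complete $p$-ring structure underpinning \S\ref{s:mix}; all remaining steps are bookkeeping around results already established in the paper.
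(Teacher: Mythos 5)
Your quasi-deformation strategy runs into an irreconcilable tension, and the gap is in the claim that $R'$ is purely toric.

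You set $S':=\pow{V}{y_2,\ldots,y_d,\,z_1-k_1,\ldots,z_n-k_n}$, so the regular system of parameters of $S'$ is $(p,y_2,\ldots,y_d,z_1-k_1,\ldots,z_n-k_n)$, and each $z_i$ is a \emph{unit} of $S'$. Unwinding the definition of $\mathcal R(S',\Gamma,\varphi',1)$, the monomial attached to $\varphi'(\gamma)=(\varphi(\gamma),\gamma)$ is taken in the regular parameters, so the defining binomial is $\tuple u^\gamma-\tuple y^{\varphi(\gamma)}(\tuple z-\tuple k)^\gamma$, \emph{not} $\tuple u^\gamma-\tuple y^{\varphi(\gamma)}\tuple z^\gamma$. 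With these honest equations, the reduction modulo $\tuple f$ does not give $R$. If instead you insist on the equations $\tuple u^\gamma-\tuple y^{\varphi(\gamma)}\tuple z^\gamma$ (as you write), then $\tuple z^\gamma$ is a unit of $S'$, so these binomials have a nontrivial unit coefficient: $R'$ is $\Xi$-toric with $\Xi$ the subgroup generated by the $z_i$, which is not $\{\pm1\}$ and in general not even contained in the Teichmuller representatives, so neither \Thm{T:mixtorMCM} nor its extension in \Rem{R:mixtorMCM} applies. And the third option---making the $z_i$ genuine parameters (i.e., taking $S'=\pow V{\tuple y,\tuple z}$ with $z_i$ in the maximal ideal) so that $R'$ really is purely toric---fails at the other end: then $f_i=z_i^{a_i}-\chi(\gamma_i)$ has the nonzero constant term $-l_i$, hence is a unit of $R'$, and $R'/\tuple fR'=0$. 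There is no position for the $\tuple z$ that makes $R'$ simultaneously purely toric and a source of a quasi-deformation onto $R$.

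The paper avoids this by a direct change of variables rather than a deformation. Using that $\kappa$ is algebraically closed, it Hensel-lifts $g_i$ with $\chi(\gamma_i)=g_i^{a_i}$, then solves the system $\tuple t^{\varphi(\epsilon_i)}=g_i$ by the multivariate Hensel lemma---here tameness is used a second time, beyond the $p\nmid a_i$ condition you noted, to guarantee that $\mat A_\varphi$ has maximal rank over $\kappa$ so the Jacobian determinant $\tuple t^\beta\det(\mat A_\varphi)$ is a unit. The resulting units $h_i=z_i^{a_i}$ satisfy $\tuple h^{\varphi(\gamma)}=\chi(\gamma)$, and the substitution $y_i\mapsto h_iy_i$ absorbs the character into the coordinates, reducing to a genuinely purely toric ring to which \Thm{T:mixtorMCM} applies. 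In short: you correctly identified the two ingredients (Hensel plus \Thm{T:mixtorMCM}), but they must be combined by rescaling the $y_i$, not by adding and then cutting the $z_i$; the $\tuple z$-trick in \Lem{L:torNN} is safe there only because that lemma needs a dimension count, not the purely toric hypothesis.
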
 
\begin{proof}
Without loss of generality, we may assume that the residue field $\kappa$ is moreover algebraically closed. As in the previous case, we may adjoin one more variable $y_1$ and prove the result    for the (partial regular system of parameters) $\tuple y:=\rij yd$ (that is to say, the defining equations do not involve powers of $p$).
Let $\bar \chi\colon\Gamma\to \kappa$ be the composition of $\gamma$ with the residue map $S^*\to \kappa^*$. 
Since $\kappa$ is algebraically closed, $\kappa^*$ is divisible, and so $\bar\chi$ extends to a true character $\mathbb Q^n\to \kappa^*$, which we continue to denote by $\bar\chi$. Let $\epsilon_i\in\nat^n$ be the standard basis and let $l_i:=\bar\chi(\epsilon_i)$. By fullness, some positive multiple of  each $\epsilon_i$ lies in $\Gamma$; let $\gamma_i:=a_i\epsilon_i$ be the least such positive multiple. In particular,  $\bar\chi(\gamma_i)=l_i^{a_i}$ and the $i$-th row  of the matrix $\mat A_\varphi$ associated to $\varphi$ has  common denominator  dividing $a_i$. The tameness assumption implies that no $a_i$ is divisible by $p$ and hence that $\mat A_\varphi$ is defined over $\kappa$ and has maximal rank.  In particular, by Hensel's Lemma, we can   find units $g_i\in S$ such that $\chi({\gamma_i})=g_i^{a_i}$. Consider the system of equations
$$
\tuple t^{\varphi(\epsilon_i)}=g_i\quad\text{for $i=\range 1n$}
$$
 over $S$ in the $n$ unknowns $\tuple t:=\rij td$. Upon adding some dummy variables, we may assume that $n=d$. The residue in $\kappa$ of the  determinant of the Jacobian of this matrix  is of the form   $\tuple t^\beta\op{det}(\mat A_\varphi)$, for some $\beta\in\nat^d$, whence is non-zero. By the multivariate Hensel's Lemma, we can therefore find a solution $\rij zd$ of the above system in $S$ (note that each $z_i$ is necessarily a unit). Put $h_i:=z_i^{a_i}$ and $\tuple h=\rij hd$. It follows that $\tuple h^{\varphi(\gamma)}= \chi(\gamma)$, for every $\gamma\in \Gamma$. Hence we can make the change of  variables $y_i\mapsto  h_iy_i$, reducing to the case that $\chi$ is the trivial character, that is to say, the purely toric case, so that we finish off with an application of \Thm{T:mixtorMCM}.
\end{proof}

%

\providecommand{\bysame}{\leavevmode\hbox to3em{\hrulefill}\thinspace}
\providecommand{\MR}{\relax\ifhmode\unskip\space\fi MR }
\providecommand{\MRhref}[2]{%
  \href{http://www.ams.org/mathscinet-getitem?mr=#1}{#2}
}
\providecommand{\href}[2]{#2}

\end{document}